\documentclass[11pt]{amsart}

\usepackage[margin=1in]{geometry}
\usepackage{amsmath, amssymb, amsthm}
\usepackage{mathtools}
\usepackage{hyperref}
\usepackage{enumitem}
\usepackage{booktabs}
\usepackage{longtable}
\usepackage{array}
\usepackage{microtype}
\usepackage{listings}
\usepackage{xcolor}

\hypersetup{
  colorlinks=true,
  linkcolor=blue,
  citecolor=blue,
  urlcolor=blue
}

\newtheorem{conjecture}{Conjecture}
\newtheorem{definition}{Definition}

\newtheorem{lemma}{Lemma}

\title{Evolving Ranking Functions for Canonical Blow-Ups 
in Positive Characteristic}
\author{Gergely B\'erczi}
\address{Aarhus University}
\email{gergely.berczi@math.au.dk}

\lstdefinestyle{py}{
  language=Python,
  basicstyle=\ttfamily\small,
  keywordstyle=\color{blue!70!black},
  commentstyle=\color{green!40!black},
  stringstyle=\color{red!60!black},
  showstringspaces=false,
  breaklines=true,
  columns=fullflexible,
  frame=single,
  framerule=0.3pt
}

\begin{document}

\begin{abstract}
Resolution of singularities in positive characteristic remains a long-standing open problem in algebraic geometry. In characteristic zero, the problem was solved by Hironaka in 1964, work for which he was awarded the Fields Medal. Modern proofs proceed by constructing suitable ranking functions, that is,  invariants shown to strictly decrease along canonical sequences of blow-ups, ensuring termination. In positive characteristic, however, no such general ranking function is known: Frobenius-specific pathologies, such as the kangaroo phenomenon, can cause classical characteristic-zero invariants to plateau or even temporarily increase, presenting a fundamental obstruction to existing approaches.
In this paper we report a sequence of experiments using the evolutionary search model AlphaEvolve
\cite{NovikovEtAl2025AlphaEvolve}, designed to discover candidate ranking functions for a toy canonical blow-up process.

Our test benchmarks consist of carefully selected hypersurface singularities in dimension $4$ and characteristic
$p=3$, with monic purely inseparable leading term $z^3$, a regime in which naive order-based invariants often fail. After iteratively refining the experimental design, we obtained
a discretized five-component lexicographic ranking function satisfying a bounded-delay descent criterion with zero violations across the benchmark. These experiments in turn motivated our main results: the conjectural delayed ranking functions in characteristic $3$ formulated in Conjectures \ref{conj:clean-lex-weierstrass} and \ref{conj:disc-robust}.

\end{abstract}
\maketitle

\section{Introduction}
Existence of resolution of singularities is one of the central problems of geometry. Systems of polynomial equations in several variables appear in many areas of science and engineering. If such a system involves $n$ variables, its set of solutions forms a subset of $\mathbb{R}^n$, or more generally $k^n$ if we work over the field $k$. In algebraic geometry, this set is called a \emph{variety} and it can have a very sophisticated geometric structure.
A major difficulty in working with these systems is that their solution sets may contain points--so-called \emph{singular points}--where the local geometry is ill-behaved.  A fundamental problem in geometry is to modify the solution set in a controlled way so that these singular points are removed without changing the global geometry significantly. This procedure is known as \emph{resolution of singularities}. One of the main tools used for this purpose is the iterative use of a simple local modification  called \emph{blowing up}.
As a toy example, consider the plane curve defined by the equation
\[
y^2 - x^3 + x^2 = 0.
\]
This curve has a singular point at the origin $(0,0)$, called a node (or double point). The blowing-up at the origin in this simple situation means substitution $y = x t$ which gives
\[
x^2(t^2 - x - 1) = 0.
\]
This equation describes two components: the line $x = 0$ (shown in green, called the \emph{exceptional divisor}) and the smooth curve $t^2 - x - 1 = 0$ (shown in black, called the \emph{proper transform} of the original curve). The black curve is non-singular and represents the resolved version of the original curve.
Geometrically, this blow-up replaces the singular point at the origin by all possible tangent directions at that point, which form the projectivized tangent space of $ \mathbb{R}^2$ at the origin. Note that for this toy example the other substitution $y=xt$ does not add extra points and we can drop that "affine chart". The origin is called the \emph{center} of the blow-up. 

\begin{figure}[ht!]
\centering
\includegraphics[width=40mm]{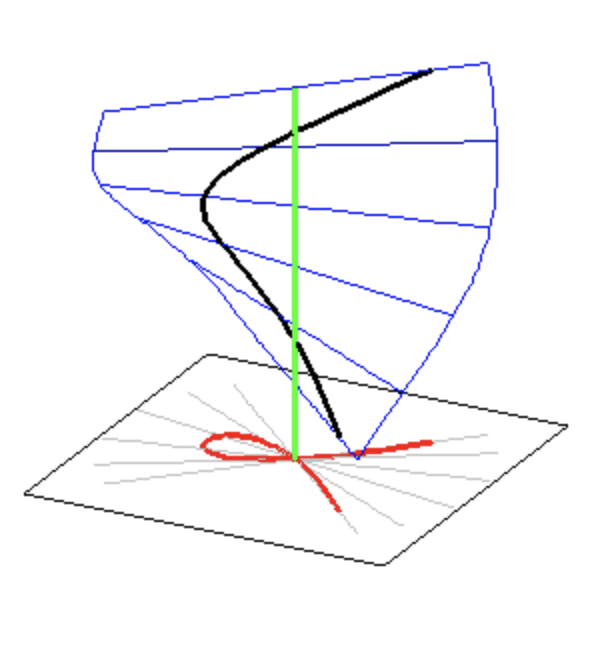}
\caption{A nodal singularity and its resolution. Image by Dona Arapura, https://www.math.purdue.edu/~arapura/graph/nodal.html}
\label{fig1}
\end{figure}

More generally, given an integral scheme $X$ of finite type over a field $k$, the resolution problem asks for a proper birational morphism $\pi\colon Y\to X$ with $Y$ smooth/regular. One aims for a canonical resolution algorithm, given by an iterative sequence of blow-ups with (possibly smooth) centers, where the choice of each center depends only on the intrinsic singularities of $X$ and on the accumulated exceptional divisor data. 

In characteristic zero the existence of such a $\pi$ and iterative process was proved by Hironaka \cite{Hironaka1964}. This celebrated breakthrough result was a major part of the work for which he received the Fields Medal (1970). Hironaka's original argument is essentially existential and highly non-constructive: it does not provide a canonical, purely intrinsic ranking function that determines the centers from local singularity data alone, and it necessarily involves delicate bookkeeping of the accumulating exceptional divisor, i.e. the history of the blowups.

Over the subsequent decades this existence theorem has been refined into a family of canonical and functorial desingularization algorithms.
These algorithms are driven by well-founded resolution functions (also called invariants
or ranking functions) built from intrinsic local data: orders of ideals, coefficient ideals,
Hilbert-Samuel data, and the combinatorics of exceptional divisors.
Without trying to be exhaustive, we refer to the canonical constructions of Bierstone-Milman~\cite{BierstoneMilman1997},
the constructive approach of Villamayor~\cite{Villamayor1989} and its refinements,
and to W\l odarczyk's simplification of the overall structure~\cite{Wlodarczyk2005}, and the recent new simple invariant found by Abramovich-Temkin-W\l odarczyk \cite{AbramovichTemkinWlodarczyk2024,Abramovich2018ICM}.

In positive characteristic the picture is far less complete. Resolution by successive blow-ups is established only in low dimensions. Lipman proved desingularization for excellent surfaces \cite{Lipman1978}, and Cossart-Jannsen-Saito later obtained functorial embedded and non-embedded resolution for arbitrary excellent two-dimensional schemes \cite{CJS2009}. In dimension three, the program of Cossart-Piltant culminates in a resolution theorem for quasi-excellent schemes in arbitrary characteristic (see, e.g., \cite{CossartPiltant2008,CossartPiltant2009,CossartPiltant2014}). In dimension $\ge 4$ the resolution problem remains open, and a decreasing ranking function based on intrinsic invariants, comparable to the characteristic-zero algorithms is still not known.

In positive characteristic most proposed strategies still aim for a canonical blow-up process: one repeatedly blows up smooth centers selected by a complexity measure, with the goal of reaching a monomial (combinatorial) phase where the remaining steps are governed by a normal crossings divisor. The characteristic-zero inductive toolkit, however, breaks down in several ways: hypersurfaces of maximal contact may fail to exist even in low dimension, Frobenius effects interfere with differential constructions, and secondary invariants built from coefficient ideals or residual orders can increase under canonical blow-ups \cite{Hauser2008,Hauser2010,HauserPerlega2018}. Existing approaches attempt to circumvent these obstacles, for instance via elimination-theoretic induction using generic projections \cite{Villamayor1989,Villamayor1995}, or via the Idealistic Filtration Program of Kawanoue-Matsuki, which formulates an explicit blow-up procedure conjectured to terminate and reduces the general case to a monomial case \cite{KawanoueMatsuki2006,KawanoueMatsuki2015}. A weaker resolution result, valid in all dimensions, is known as de Jong's alterations theorem. It produces a generically finite morphism from a regular scheme, though not a birational resolution \cite{deJong1996}. 
The experiments in this paper explore the following question:

\begin{quote}
Can one discover, even in a simplified setting, a canonical blow-up process and a ranking function built from \emph{intrinsic} numerical data of singularities that induces finite termination of the process, despite the possibility of temporary increases?
\end{quote}

\medskip
\noindent\textbf{Methodology and results.}
Our approach is computational. We design a toy canonical blow-up simulator consisting of 
\begin{enumerate}
    \item a test benchmark of carefully selected purely inseparable hypersurface singularities in characteristic $p$,
    \item  a canonical choice of blow-up centers at each step and
    \item a carefully selected collection of intrinsic invariants of the singularities (or suitable proxies, when encoding the invariants directly would significantly slow down the evaluation) that serve as features for a ranking function.The feature set deliberately targets invariants that appear in the positive-characteristic literature.
\end{enumerate}

Using this fixed simulator, we design an AlphaEvolve experiment to discover explicit ranking functions that depend only on the simulator features and evolve a delayed ranking function that is perfect on the chosen test benchmark.

For each experiment, the simulator is fixed in advance: the center-selection rule, the feature set, and the testing benchmark are all predetermined. The simulator follows a single deterministic rule for choosing the next blow-up center, and AlphaEvolve is used solely to search for ranking functions, expressed in terms of the fixed features, that provides delayed  for this given process \cite{NovikovEtAl2025AlphaEvolve,AlphaEvolve2025Scale}.

We performed a sequence of experiments that led to a progressive evolution of the simulator. After each experiment, we manually updated and refined the test benchmark and the feature set (including their proxies), while keeping the center-selection mechanism unchanged. A natural next step would be to make the center-selection rule itself learnable and subject to optimization. Allowing the system to explore alternative blow-up policies could reveal qualitatively new behaviors and provide stronger empirical evidence for termination mechanisms.

After an initial broad benchmark proved to be too ambitious (see Section \ref{section:experimentI}), we restricted our attention to a focused family in
ambient dimension $4$ and characteristic $p=3$ (Appendix A).
Within this focused regime we obtained three evolved ranking functions:
\begin{enumerate}
\item an optimized \emph{continuous} two-component ranker achieving zero violations on a benchmark of $71$ singularities (Section \ref{subsec:experiment1}).
\item a \emph{discretized} five-component ranking function also achieving zero violations on the same benchmark (Section \ref{subsec:discretize}). Discretization is crucial: it implies finite termination.  
\item after discovering a counterexample to an initial structural conjecture for the discretized ranking function
(Theorem~\ref{thm:counterexample}), we extended the benchmark to $100$ adversarial instances, including the found counterexamples. We found a 
new five-component ranker which achieves zero violations on all $100$ instances. 
(Section \ref{subsec:disc-100}).
\end{enumerate}
The evolution of experiments suggested two conjecture about the existence of ranking functions for a restricted class of positive
characteristic singularities, formulated in Conjecture \ref{conj:clean-lex-weierstrass} and
Conjecture~\ref{conj:disc-robust}.

\subsection*{How these experiments were run}
\label{sec:how-run}

This is not a traditional mathematics paper.
Rather, this is a report written by a mathematician for mathematicians, describing a possible way to use AlphaEvolve as an experimental engine to investigate a central problem in birational geometry.
As a geometer with some prior experience in machine learning and reinforcement learning over the last few years, I was keen to explore this territory, but I also found myself repeatedly outside of my comfort zone: the work consisted of designing a computational analogue of the resolution process, proposing candidate invariants and their proxies, iterating on a scoring system that rewards the behavior one expects from a terminating resolution program, and iteratively analyze the outcome of the experiements and make the necessary improvements of the simulator for the next run. 
Here is my work pipeline:
\begin{enumerate}
  \item Curate a test set of singularities.
  The work started with the careful collection of families of positive-characteristic singularities designed to exhibit the main pathologies (purely inseparable leading forms, kangaroo-style secondary jumps, and tie-breaking instability).
  The test was developed iteratively: whenever a learned ranker passed too easily, we went back and added adversarial instances.

  \item Select a dictionary of invariants.
  Next, we fixed a list of $26$ numerical invariants of singularities (see the list in Section \ref{subsec:features}) that are motivated by the characteristic-zero theory and by positive-characteristic phenomena. 
  We implemented these in the AlphaEvolve evaluation harness by explicit proxy computations on monomials; each proxy was checked on examples.

  \item Vibe-code the harness and evolve blocks.
  The evaluation harness and the evolve blocks were vibe-coded using a mixture of large language models (notably AlphaEvolve UI's built-in Gemini 3 engine and GPT-5.2), and then repeatedly checked and edited for basic invariance properties, and to eliminate errors.
  
  \item Iterate through experimental runs.
  Over the course of this project I ran roughly eleven experiments, with runtimes ranging from $3$ hours to $2$ days.  Careful analysis of each run provided a concrete learning path: before the next run we improved the scoring system, identified weaknesses in the test set, repaired several feature proxies, and (crucially) refined the notion of a delayed ranking function to accommodate long plateaux typical in characteristic $p$.
\end{enumerate}

The paper should be read as an exploration of how one can design and evolve resolution experiments rather than as a finished mathematical theory. It was partly motivated by our work \cite{BercziFanZeng} which developed reinforcement learning models to find optimal resolutions in characteristic zero.  Recent works \cite{AEBruhat,AlphaEvolve2025Scale} indicate that AlphaEvolve can serve as a productive engine for mathematical exploration, generating candidate constructions that can then be verified and, in favorable cases, promoted to rigorous theorems.

\noindent \textbf{Declaration of use of AI}
This is an experimental mathematics paper, and a variety of AI-based tools were used extensively throughout the project. The AlphaEvolve user interface relies on gemini-2.5-pro and gemini-3-pro models to generate summaries of the problem setup, describe candidate strategies, suggest and write the evolve blocks and evaluation harnesses. These capabilities were used extensively during the exploratory and iterative phases of the project. We also used code generation outside the AlphaEvolve system for vibe-coding evaluation harnesses, simulator stubs, and for generating and expanding synthetic test sets of singularities. AlphaEvolve automatically produces detailed, Gemini-generated experiment reports, including performance summaries and qualitative analyses of evolved programs. These reports served as a starting point for several sections of this paper; however, the mathematical conclusions, conjectures, and counterexamples presented here arise from independent mathematical analysis. Except as otherwise indicated, the contents of this paper are the work of the author.

\subsection*{Acknowledgements}
I would like to thank the Google DeepMind AlphaEvolve team for the opportunity to participate in the inaugural Trusted Users group with early access to AlphaEvolve, as well as for their continued support throughout this project. I am particularly grateful to Adam Zsolt Wagner, who served as my primary point of contact and provided exceptionally helpful guidance on the UI platform. I am deeply appreciative of this mathematical journey, through which I gained substantial insight into the vast subject of resolutions in positive characteristic, and I benefited greatly from the many discussions and exchanges within the AlphaEvolve community of testers and mathematicians.
\section*{AlphaEvolve as an experimental engine for mathematicians}
\label{sec:alphaevolve}

The experiments reported here were run inside the evolutionary search engine \emph{AlphaEvolve} \cite{NovikovEtAl2025AlphaEvolve,AlphaEvolve2025Scale}.
Since this paper is addressed primarily to mathematicians, we briefly summarize the AlphaEvolve system and the parts of the workflow that were most relevant to us. 

At a high level, AlphaEvolve is a search engine in the space of python scripts, guided by a user-defined score function.
It can be viewed as a reinforcement-learning (RL) environment whose \emph{state} is a Python program, whose \emph{actions} are edits to a designated region of that program,
and whose \emph{reward} is a user-defined score produced by a fixed evaluation harness.
Concretely, the user supplies:
\begin{enumerate}[label=(\roman*)]
    \item a \emph{fixed evaluation harness}, i.e.\ a deterministic procedure that simulates and scores the mathematical task of interest (thereby defining the reward signal), and
    \item an \emph{evolve block}, i.e.\ a delimited region of code that the system is permitted to modify and evolve (thereby defining the action space),
    \item an LLM prompt used to generate candidate edits and new code variants within the evolve block (thereby defining a proposal distribution over actions).
\end{enumerate}
AlphaEvolve then iteratively proposes edits to the evolve block, runs the harness on the resulting script, and updates a memory of high-reward candidates for subsequent exploration.
In this sense, the method implements an outer search loop driven by reward, together with an inner loop in which an LLM generates code mutations.

For mathematicians, the key conceptual take-away is that the object being optimized is not a statement but a policy (i.e python script) whose quality is measured only through the scoring harness.
As a result, correctness is inseparable from the specification of the scoring function.

In our setting the evaluation harness is a static simulator of a canonical blow-up process on a finite list of test singularities.
The evolve block is a pure function \texttt{ranking\_function(features)} producing a tuple-valued rank from a fixed feature vector (i.e vector formed by the numerical invariants of the singularity).
AlphaEvolve's separation of concerns principle is crucial: the harness (simulation, features, scoring) is fixed across evolutionary iterations, while the evolve block is the only part which changes.
This reduces the number of ways in which the system can accidentally cheat the objective.
Two practical rules were essential.
\begin{itemize}
  \item \emph{Input hygiene.}
The evolve block receives only a bounded, deterministic feature vector computed from the \emph{current} simulator state. In particular, it has no access to global randomness or to the full history of the run. A straightforward way for a ranking function to ``cheat'' (which we indeed encountered in the first few runs) is to exploit \emph{time-dependent} signals inadvertently present in the state, such as the number of exceptional divisors created so far. We enforced purity via a determinism test: identical inputs are required to produce identical outputs.

\item \emph{State persistence.}
In some AlphaEvolve modes one can allow information to persist across generations via parent outputs (e.g.\ \texttt{use\_parent\_outputs=True}).
For the experiments in this paper we intentionally disabled any such mechanism inside the ranker itself: the ranker is memoryless, and all state is carried by the fixed simulator.

  \end{itemize}

AlphaEvolve optimizes whatever objective is measured by the harness.
Therefore, the meaning of a learned ranking function is inseparable from:
(i) the test set,
(ii) the feature set,
(iii) the penalty terms and tie-breakers used in scoring, and
(iv) the stopping criteria.
The most time-consuming part of this project was not tuning the model, but iterating on these design choices until they matched the intended mathematical goal (strict descent only after geometric drops; bounded-delay descent across plateaux; $0$-normalization in monomial phase; etc.).

Finally, the AlphaEvolve UI has a built-in visualisation functionality, helping in providing human-readable analysis. In our runs we included a visualization routine producing rank trajectories on a reference instance, and we logged per-stage violation counts, plateau lengths, and increase counts.
This made it feasible to detect when an apparent perfect score was actually an artifact of a weak test suite or a defect of the scoring system.

\section{Background and motivation}\label{sec:background}

The termination of any resolution algorithm is guaranteed by the existence of a \emph{ranking function} (also called resolution function or Lyapunov function)
\[
\rho : \{\text{local singularity data}\}\longrightarrow \Gamma
\]
with values in a well-ordered set $\Gamma$ (i.e a totally ordered set in which every non-empty subset has a minimal element, typically a finite lexicographic product of $\mathbb{N}$), such that the chosen blow-up centers are cut out by loci of
maximal value of $\rho$, and such that after performing the blow-up the value of $\rho$
strictly decreases at every point lying above the center.
Since $\Gamma$ admits no infinite strictly descending chains, termination follows immediately.

In characteristic zero, such ranking functions are known and can be chosen \emph{intrinsic} (independent of auxiliary
choices) and even functorial with respect to smooth morphisms.  In the embedded setting, such
functions are built recursively from local, intrinsically defined data at a point (explained in detail in this section):
\begin{itemize}
\item the order (or multiplicity) $\mathrm{ord}_x(I)$ of the ideal (or of the defining equation);
\item auxiliary coefficient ideals on hypersurfaces of maximal contact, which exist in characteristic zero and allow an induction on dimension;
\item boundary data recording the exceptional divisor created so far (often via variants of $E$-order,
weighted orders, and related combinatorics).
\end{itemize}
Note that although their definition proceeds via auxiliary constructions (e.g. choosing a hypersurface of maximal contact to form coefficient ideals), the resulting invariant and the induced center are independent of these choices. These ingredients already appear in Hironaka's proof \cite{Hironaka1964} and they were
made explicit and canonical in forthcoming algorithmic constructions of Villamayor \cite{Villamayor1989},
Bierstone-Milman \cite{BierstoneMilman1997}, and W\l odarczyk \cite{Wlodarczyk2005}.

A recent refinement due to Abramovich-Temkin-W\l odarczyk gives a particularly transparent
functorial embedded resolution procedure via stack-theoretic \emph{weighted} blowings up
\cite{AbramovichTemkinWlodarczyk2024,Abramovich2018ICM}.  A striking feature is that the algorithm is driven by a
very simple invariant (and an associated maximal center) defined from an iterated maximal-contact
construction, without building the resolution history into the definition of the invariant. After a brief detour to review the necessary algebraic notions, we return to the description of this invariant in the next section.

\subsection{Algebraic preliminaries for the resolution invariants} In this section we briefly review the fundamental invariants historically used to measure singularities, and we illustrate the key differences in their behavior in characteristic zero and in characteristic $p$ through a toy example. Readers who find the discussion too abstract may wish to skip ahead to the next section, which contains an explicit worked-out example.
 
A \emph{marked ideal} on a smooth ambient variety $Y$ is a pair $(I,b)$ consisting of a coherent ideal
$I\subset\mathcal{O}_Y$ and an integer $b\in\mathbb{N}$ (the \emph{control}), with
\[
\mathrm{cosupp}(I,b)=\{p\in Y:\mathrm{ord}_p(I)\ge b\}.
\]
In characteristic $0$, when $(I,b)$ is of maximal order near $p$, one can choose a hypersurface
of maximal contact through $p$.  In local coordinates this may be encoded by a
\emph{maximal-contact element} $x\in\mathcal{O}_{Y,p}$, i.e.\ a regular parameter whose hypersurface
$V(x)$ contains the relevant equimultiple locus and is stable under admissible blow-ups.
Given such an element $x$, one associates to $(I,b)$ a \emph{coefficient ideal}
$\mathrm{Coeff}(I,b)$ on $V(x)$: informally, it is the ideal generated by the coefficients of an
$x$-expansion of generators of $I$, and equivalently it can be described using derivatives of $I$
up to order $b-1$ and then restricting to $V(x)$.
This coefficient ideal is the basic tool that allows one to descend in dimension.

When a simple normal crossings \emph{boundary divisor} $D$ is present on $Y$ (encoding the accumulated
exceptional divisor), the coefficient-ideal construction naturally splits into an exceptional monomial
part forced by~$D$ and a residual part measuring the genuinely new singularity.
Concretely, near a point $p$ choose local parameters
\[
u_1,\dots,u_r, v_{r+1},\dots,v_n
\]
such that $D=\sum_{i=1}^r m_i E_i$ with $E_i=\{u_i=0\}$; equivalently, the \emph{boundary ideal}
$I_Y(D)\subset\mathcal{O}_{Y,p}$ is invertible and generated by the monomial
\[
u^m := \prod_{i=1}^r u_i^{m_i}.
\]
If $V$ is a (weak) maximal-contact hypersurface through $p$ chosen transversal to $D$, then
$D\cap V$ is again SNC on $V$ and the restricted boundary ideal
\begin{equation}\label{factorization}
I_V(D\cap V)=I_Y(D)\cdot\mathcal{O}_{V,p}
\end{equation}
is invertible and generated by the same monomial viewed in $\mathcal{O}_{V,p}$.

Now form the coefficient ideal $\mathrm{Coeff}_V(I,b)$ on $V$.  Because the boundary components
contribute deterministically under controlled transforms, $\mathrm{Coeff}_V(I,b)$ typically contains a
largest monomial factor supported on $D\cap V$.  More precisely, there is a \emph{unique} monomial
$M$ in the boundary parameters (a product of powers of the $u_i|_V$) of maximal degree dividing
$\mathrm{Coeff}_V(I,b)$; factoring it off gives a canonical decomposition
\begin{equation}\label{eq:coeff-factor-elab}
\mathrm{Coeff}_V(I,b)= I_V(D\cap V)\cdot I^-,
\end{equation}
where $I_V(D\cap V)$ is this exceptional monomial (equivalently, the invertible ideal determined by the
boundary) and $I^-$ is the \emph{residual factor}, characterized by the property that it is not divisible by any further
monomial supported on $D\cap V$, see the toy kangoroo example in Section \ref{subsec:kangoroo}. Because exceptional components contribute monomial factors in a predictable way, one canonically
\emph{divides out} the maximal boundary-supported monomial factor $I_Y(D)$ from $\mathrm{Coeff}_V(I,b)$.

The \emph{residual order} (or \emph{shade}) is $\mathrm{ord}_p(I^-)$; it is the natural secondary term one
would like to use in a lexicographic resolution invariant.  In characteristic $0$ it behaves well after
factoring off the exceptional monomial, while in positive characteristic it can jump upward along
equiconstant branches (the kangaroo phenomenon, see below).

\subsection{A toy kangaroo example}\label{subsec:kangoroo}
We now illustrate these abstract concepts through an example from \cite{Hauser2010}, focusing on key differences between the characteristic zero and positive characteristic cases. Let $k$ be a field of characteristic $p=2$, and set $Y=\mathbb{A}_k^3=\mathrm{Spec}\,k[x,y,z]$. 
Consider the marked principal ideal $(I,b)$ with
\[
I=(f_0),\qquad b=2,\qquad f_0:=x^2+y^7+yz^4\in k[x,y,z].
\]
By definition at the origin 
\[
\mathrm{ord}_0(I)=\max\{n\ge 0:\ f_0\in \mathfrak m^n\}.
\]
where $\mathfrak{m}=(x,y,z)$ is the maximal ideal at the origin. 
Since $f_0\in\mathfrak m^2\setminus \mathfrak m^3$, we have $\mathrm{ord}_0(I)=2$.
For a principal ideal in a smooth ambient space, the cosupport of $(I,2)$ is the
multiplicity $\ge 2$ locus; for hypersurfaces this coincides with the singular locus and is detected by
first partial derivatives:
\[
\mathrm{cosupp}(I,2)=\{\,p\in Y:\ f_0(p)=0\ \text{and}\ \partial_x f_0(p)=\partial_y f_0(p)=\partial_z f_0(p)=0\,\}.
\]
In characteristic $2$ we compute
\[
\partial_x f_0 = 2x = 0,\qquad
\partial_y f_0 = 7y^6 + z^4 = y^6+z^4,\qquad
\partial_z f_0 = 4yz^3 = 0,
\]
hence near $0$ the cosupport is cut out by
\[
f_0=0,\qquad y^6+z^4=0.
\]
Note that in characteristic $0$ $\partial_x(x^2)=2x$ is not identically zero, so the Jacobian
conditions constrain $x$ as well; this is one manifestation of the purely inseparable pathology. Now take the regular parameter $x\in \mathfrak m$ and the hypersurface
\[
V:=V(x)=\{x=0\}\subset Y.
\]
We also have
\[
\mathrm{cosupp}(I,2)\subset V(x).
\]
Indeed, in characteristic $2$ we can write
\[
f_0=x^2+y^7+yz^4 \;=\; x^2 + y\,(y^6+z^4),
\]
hence on the locus defined by $y^6+z^4=0$, the additional condition $f_0=0$ forces $x^2=0$, and therefore
$x=0$ (already set-theoretically, and in fact after taking radicals).
Write the $x$-expansion
\[
f_0=x^2+a_1(y,z)x+a_0(y,z),\qquad a_1=0,\qquad a_0=y^7+yz^4.
\]
For $b=2$, the coefficient ideal on $V$ is generated by the coefficients of degree $<2$ in the
$x$-expansion (equivalently, by derivatives up to order $b-1=1$ and restriction to $V$), hence
\[
\mathrm{Coeff}_V(I,2)=\bigl(a_0,\ a_1\bigr)=(y^7+yz^4)\subset k[y,z]\cong \mathcal{O}_{V,0}.
\]
Its order at $0$ is $\mathrm{ord}_0(\mathrm{Coeff}_V(I,2))=\mathrm{ord}_0(y^7+yz^4)=5$.

We now follow a specific sequence of point blow-ups (in suitable affine charts) which produces an
\emph{antelope point} and then a \emph{kangaroo point}.  Denote by $f_i$ the corresponding strict transforms
(after the indicated coordinate substitutions).

\paragraph{First blow-up:}
In the affine chart $\{y \neq 0\}$ the blow-up at the origin simply means the coordinate change
\[
(x,y,z)\longmapsto (xy,\ y,\ zy).
\]
Then the proper transform of $f_0$ is 
\[
f_1 \;=\; x^2 + y^3\bigl(y^2+z^4\bigr).
\]
For $b=2$, the cosupport can again be described by the vanishing of $f_1$ and its first partials.  In
characteristic $2$,
\[
\partial_x f_1=0,\qquad
\partial_y f_1=\partial_y(y^5+y^3z^4)=y^4+y^2z^4=y^2(y^2+z^4),\qquad
\partial_z f_1=\partial_z(y^3z^4)=0,
\]
so locally $\mathrm{cosupp}\bigl((f_1),2\bigr)$ is cut out by
$f_1=0, y^2(y^2+z^4)=0$.
\paragraph{Second blow-up:} We blow-up at the origin and take the $\{z\neq 0\}$ affine chart where the coordinates transform as
\[
(x,y,z)\longmapsto (xz,\ yz,\ z).
\]
Then the proper transform of $f_1$ is
\[
f_2 = x^2 + y^3z^3\bigl(y^2+z^2\bigr).
\]
At this point the accumulated exceptional divisor has SNC components $\{y=0\}$ and $\{z=0\}$ with
multiplicities $3$ and $3$, so the boundary divisor $D$ has boundary ideal
\[
I_Y(D)=(y^3z^3)\subset \mathcal{O}_{Y,a_2}.
\]
Choosing $V=\{x=0\}$ transversal to $D$, we have $D\cap V$ SNC on $V$ and
\[
I_V(D\cap V)=I_Y(D)\cdot \mathcal{O}_{V,a_2}=(y^3z^3)\subset \mathcal{O}_{V,a_2}.
\]
Since $f_2=x^2+a_0$ with $a_0=y^3z^3(y^2+z^2)$, the coefficient ideal on $V$ is
\[
\mathrm{Coeff}_V\bigl((f_2),2\bigr)=(y^3z^3(y^2+z^2)).
\]
The maximal monomial factor supported on $D\cap V$ dividing this ideal is $y^3z^3$, hence the canonical
decomposition is
\[
\mathrm{Coeff}_V\bigl((f_2),2\bigr)
= I_V(D\cap V)\cdot I^-,
\qquad I^-=(y^2+z^2).
\]
The \emph{shade} (residual order) at the antelope point is
\[
\mathrm{shade}(a_2)=\mathrm{ord}_{a_2}(I^-)=\mathrm{ord}_0(y^2+z^2)=2.
\]

\paragraph{Third blow-up:}
Blow up the antelope point $a_2=(0,0,0)\in Y$. In the $z$-chart of this point blow-up we use the standard
substitution
\[
(x,y,z)\longmapsto (xz,\ yz,\ z).
\]
giving us the proper transform 
\[
f_3^{\mathrm{raw}}
= x^2+z^6\,y^3(y^2+1)
= x^2+z^6\,(y^5+y^3),
\]
where we use $\mathrm{char}(k)=2$.

To obtain the \emph{translational} kangaroo point, we move along the new exceptional divisor
$\{z=0\}$ to the point with $y=1$ in this chart, and recenter coordinates there by the translation
\[
y\longmapsto y+1,
\]
so that the chosen point becomes the origin.  In these recentered coordinates we get
\[
f_3^{\mathrm{raw}}
\;=\; x^2+z^6\,(y+1)^3\bigl((y+1)^2+1\bigr)
\;=\; x^2+z^6\,(y^5+y^4+y^3+y^2).
\]
At this point one performs the standard coordinate correction reflecting the failure of stable maximal
contact in characteristic $p$:
\[
x\longmapsto x+yz^3.
\]
Since $(x+yz^3)^2=x^2+y^2z^6$ in characteristic $2$, this eliminates the $y^2z^6$ term and the local
equation takes the announced form
\[
f_3 \;=\; x^2 + z^6\,(y^5+y^4+y^3).
\]
Here the relevant boundary is supported on the new exceptional component $\{z=0\}$ with multiplicity $6$,
so
\[
I_V(D\cap V)=(z^6).
\]
and the residual factor is
\[
I^-=(y^5+y^4+y^3)=y^3(y^2+y+1),
\]
and therefore
\[
\mathrm{shade}(a_3)=\mathrm{ord}_{a_3}(I^-)=\mathrm{ord}_0(y^5+y^4+y^3)=3.
\]
Thus, while the primary invariant $\mathrm{ord}(I)=2$ remains constant along this move, the secondary
invariant \emph{increases}:
\[
\mathrm{shade}(a_2)=2\quad\longrightarrow\quad \mathrm{shade}(a_3)=3.
\]
This upward jump of the residual order after factoring out the maximal boundary-supported monomial is the
\emph{kangaroo phenomenon}. The computations above highlight two characteristic-$p$ pathologies which do not occur in characteristic $0$:
\begin{itemize}
\item Ordinary derivatives may annihilate $p$th powers (here $\partial_x(x^2)=0$ in characteristic $2$),
      so the Jacobian conditions and differential descent behave differently; in characteristic $0$ one has
      $\partial_x(x^2)=2x\neq 0$ and the top locus interacts more rigidly with hypersurfaces of (weak) maximal contact.
\item After factoring off the exceptional monomial part of the coefficient ideal, the residual order
      (shade) is designed to decrease in characteristic $0$, but in characteristic $p$ it can jump
      upward at special points (kangaroo points), forcing additional structure or modified invariants in any
      termination strategy.
\end{itemize}

We finish this section with briefly recalling the ranking function which introduced in \cite{AbramovichTemkinWlodarczyk2024} in characteristic zero. 
Fix a point $p\in Y$ and choose a (partial) regular system of parameters
$x_1,\dots,x_k\in\mathcal{O}_{Y,p}$.
Following \cite{AbramovichTemkinWlodarczyk2024}, such a sequence is a \emph{maximal-contact sequence}
for $I$ at $p$ if it is constructed recursively by setting
\[
I_1:=I,\qquad b_i:=\mathrm{ord}_p\bigl(I_i\bigr),
\]
and requiring that for each $i\ge 1$ the parameter $x_i$ is a maximal-contact element for the marked
ideal $(I_i,b_i)$ at $p$.  One then defines the next ideal by restricting the coefficient ideal along
the successive intersections:
\[
I_{i+1}:=\mathrm{Coeff}(I_i,b_i)\big|_{V(x_1,\dots,x_i)}.
\]
From the resulting integers $b_i$ they define the invariant
\[
\mathrm{inv}_I(p):=(a_1,\dots,a_k),\qquad
a_i=\frac{b_i}{\prod_{j=1}^{i-1} b_j!},
\]
together with the associated (rationally generated) center
\[
J_p(I)=\bigl(x_1^{a_1},\dots,x_k^{a_k}\bigr).
\]
They prove that $\mathrm{inv}_I(p)$ and $J_p(I)$ are independent of the chosen maximal-contact
sequence, and that blowing up the maximal admissible center strictly decreases $\mathrm{inv}_I$
for the weak transform, yielding termination; see \cite{AbramovichTemkinWlodarczyk2024}.

\medskip
\noindent\emph{Toy example.}
Let $Y=\mathbb{A}^2=\mathrm{Spec}\,k[x,y]$ and let $I=(x^2,y^3)$ at the origin $p=(0,0)$.
Take the maximal-contact sequence $(x_1,x_2)=(x,y)$. Then
\[
b_1=\mathrm{ord}_p(I)=2,\qquad
I_2=\mathrm{Coeff}(I,2)\big|_{x=0}=(y^3),\qquad
b_2=\mathrm{ord}_p\bigl(I_2\bigr)=3,
\]
so
\[
\mathrm{inv}_I(p)=(a_1,a_2)=\Bigl(2,\frac{3}{2!}\Bigr)=\Bigl(2,\frac{3}{2}\Bigr),
\qquad
J_p(I)=\bigl(x^{2},\,y^{3/2}\bigr).
\]
Clearing denominators (e.g.\ with weights $(w_1,w_2)=(1,2)$) identifies the corresponding weighted
center with the integral ideal $(x^2,y^3)$, and the algorithm performs the associated weighted
blowing up (in the stack-theoretic sense of \cite{AbramovichTemkinWlodarczyk2024}).  By construction,
the next step replaces $I$ by its weak transform and strictly lowers $\mathrm{inv}_I$, and the process
iterates until the transform becomes monomial in suitable coordinates.

\noindent \textbf{Our strategy: drive to the monomial phase.}
Despite these obstructions, resolution by blow-ups in positive characteristics is known in low dimension: for excellent surfaces
 \cite{Lipman1978,CJS2009}, and for threefolds in
arbitrary characteristic in the work of Cossart-Piltant \cite{CossartPiltant2008,CossartPiltant2009,CossartPiltant2014}.

Rather than enforcing strict step-by-step monotonicity of a resolution invariant, we aim to push the
process toward the monomial phase, where the remaining complexity is toric geometry and hence
purely combinatorial.
We say we are in the \emph{monomial phase} at a point if
the residual factor is trivial:
\[
I^- = 1
\qquad\Longleftrightarrow\qquad
\mathrm{Coeff}_V(I,b)= I_V(D\cap V).
\]
Equivalently, the shade/residual order has reached its minimal value $0$.
In this regime, the remaining steps are governed only by the boundary stratification and can be carried
out by explicit toroidal (toric/combinatorial) transformations.

\smallskip
\noindent\emph{Toy example (monomial phase that is not simple normal crossings).}
Let $V=\mathbb{A}^2=\mathrm{Spec}(k[x,y])$ over a field of characteristic $p>0$, and consider the
boundary divisor
\[
D = 2E_x + 3E_y,\qquad E_x=\{x=0\},\ \ E_y=\{y=0\},
\]
with boundary ideal $I_V(D)=(x^2y^3)$.  This is already monomial phase (no residual factor),
but it is not reduced (hence not SNC in the strict sense).  The remaining behavior is combinatorial:
it is encoded by the exponent vector $(2,3)$ along the boundary strata.  For instance, viewing this as
a marked ideal with control $c=\mathrm{ord}_0(x^2y^3)=5$, a single blow-up of the codimension-two stratum
$Z=E_x\cap E_y$ yields controlled transforms of order $<c$ in each chart, illustrating the purely toroidal
descent mechanism from the monomial phase.

\medskip
Our early experiments attempted to enforce strict lexicographic descent at \emph{every} blow-up step.
This proved too ambitious: even in the toy simulator leading components can plateau, and a strict stepwise
score tends to induce fragile overfit conditionals. So we moved on to use a \emph{bounded-delay} condition: along the simulated canonical trajectory, the rank must improve at
least once in every window of fixed length $m$ (here $m=5$), until the monomial phase is detected.

\begin{definition}[Bounded-delay descent with window $m$]
\label{def:bounded-delay}
Let $(s_k)_{k\ge 0}$ be a trajectory of states and let
$R\colon \{\text{states}\}\to\mathbb{N}^d$ be a lexicographically ordered rank.
Write $R_k:=R(s_k)$ and define the running best $B_k = \min\nolimits_{\mathrm{lex}}\{R_0, R_1, \dots, R_k\}$.
We say that $R$ satisfies \emph{bounded-delay descent with window $m$} if, for every $k$ before termination,
\[
B_{k+m} < B_k,
\]
equivalently: within every block of $m$ consecutive steps there exists at least one strict improvement of
the best-so-far rank.
\end{definition}

This is conceptually a delayed Lyapunov-type condition.  To promote it to a termination statement one must
also ensure that $R$ takes values in a well-founded set (e.g.\ $\mathbb{N}^d$ with lex order), motivating
the discretization map~$\Pi$ in Section~\ref{subsec:discretize}.

\section{A toy canonical blow-up simulator}
\label{sec:setup}

In order to run controlled experiments on termination invariants, we fix a simplified setting in which
(i) singularities are hypersurface singularities and represented by finite monomial data 
and (ii) there is a deterministic canonical blow-up step that updates both the monomial data and an
intrinsic boundary (exceptional divisor) record.
The simulator is not intended to be a geometrically faithful implementation of resolution of singularities.
Its purpose is to provide a stable and nontrivial testbed in which known difficulties in positive
characteristic, notably long plateaux of naive order invariants and the need for boundary-aware secondary
invariants, can be studied systematically.

Note that our simulator follows a fixed, deterministic blow-up policy. This policy is implemented inside the
evaluation harness (outside the evolve block) and is therefore held constant across runs: AlphaEvolve
only mutates the ranking code, while the simulated blow-up dynamics do not evolve \cite{NovikovEtAl2025AlphaEvolve}.
In future iterations of these experiments, the blow-up policy itself could be moved into the evolve
block and learned jointly with the ranking function, potentially opening qualitatively new behaviors
and stronger empirical evidence for termination mechanisms \cite{NovikovEtAl2025AlphaEvolve}.

\subsection{Singularity selection and encoding}
Fix an integer $d\ge 2$ and affine coordinates $(x_1,\dots,x_{d-1},z)$ on $\mathbb{A}^d$
where $z$ is distinguished as an \emph{elimination variable} and $x_1,\dots,x_{d-1}$ are \emph{base variables}.
A \emph{monomial} is encoded by an exponent vector
$\mathbf{e}=(e_1,\dots,e_{d-1},e_z)\in \mathbb{N}^d$ via
\[
m(\mathbf{e}) = x_1^{e_1}\cdots x_{d-1}^{e_{d-1}} z^{e_z}.
\]
We write $|\mathbf{e}|:=\sum_v e_v$ for the total degree. In the toy simulator we restrict attention to hypersurface singularities presented (formally)
as
\begin{equation}\label{hypersurface}
f = z^p + \sum_{i=2}^N c_i\, m(\mathbf{e}_i),
\end{equation}
where $\mathrm{char}(k)=p>0$ and the leading term is \emph{monic purely inseparable} (so the coefficient of
$z^p$ is $1$).  Our encoding discards the coefficients $c_i$ and retains only a chosen list of exponent data,
i.e.\ a coarse Newton-combinatorial shadow of the hypersurface.

Each retained monomial also carries a symbolic \emph{tag} $t$.  Formally, $t$ is just a label (a string-valued type such as \texttt{pure-$z$}, \texttt{mixed}, \texttt{oblique}, \texttt{heavy-tail}, \dots),
used only by certain proxy features in Section~\ref{sec:features} to implement family-dependent heuristics. Tags are labels describing the intended role/type of a monomial in the initial synthetic benchmark, kept fixed throughout the blow-up process.
The ranking functions studied in the main conjectures are required to be intrinsic in the sense that they do
\emph{not} depend on these tags; the tags are only a bookkeeping device for constructing and stress-testing
benchmark families. Then the hypersurface \eqref{hypersurface} will be encoded as an \emph{ideal specification}: it is a finite list
\[
\mathcal{I}=\{(t_i,\mathbf{e}_i)\}_{i=1}^N,
\]
where $t_i$ is the tag and $\mathbf{e}_i\in\mathbb{N}^d$ is the exponent vector.
We always include the distinguished purely inseparable leading term $z^p$ by requiring that
\[
(t_1,\mathbf{e}_1)=(\texttt{pure-$z$},(0,\dots,0,p)).
\] 
A \emph{state} in our simulator is a pair
$
S=(\mathcal{I},\partial),
$
where $\mathcal{I}$ is an ideal specification and
\[
\partial:\{x_1,\dots,x_{d-1},z\}\to \mathbb{N}
\]
is a \emph{boundary function}.  This boundary function encodes, in the simplest possible way, the
(proper transform of) exceptional divisors from previous blow-ups: locally at the point $p$ the exceptional divisor is 
\[
E = \sum_{v\in \{x_1,\ldots x_{d-1},z\}} \partial(v)\,E_v,
\qquad E_v=\{v=0\},
\]
and its ideal is 
\[
I_Y(E) = \Bigl(\prod\nolimits_v v^{\partial(v)}\Bigr) \subset \mathcal{O}_{Y,p}.
\]
which is exactly the exceptional monomial factor in one may factor off from coefficient ideals and transforms
when defining boundary-adjusted invariants (e.g.\ $E$-order, $w$-$\mathrm{ord}$, residual order/shade);
see the discussion in \cite{Hauser2003,Hauser2008}, and our discussion in Section \ref{subsec:kangoroo}.
In our simulator, $\partial$ is the only persistent memory carried between steps.  Intrinsic ranking
functions are required to be history-free, but several boundary-aware \emph{proxy} invariants (such as
$E$-order and boundary-adjusted order proxies) are allowed to depend on~$\partial$.

\smallskip
\noindent\emph{Toy example (encoding a hypersurface and its boundary).}
Take $d=3$ with coordinates $(x,y,z)$ and $\mathrm{char}(k)=p$.
Consider the hypersurface
\[
f = z^p + x^3y^2 + x^8
\]
near the origin.  The corresponding ideal specification (forgetting coefficients) is
\[
\mathcal{I}=\bigl\{(\texttt{pure-$z$},(0,0,p)),\ (\texttt{mixed},(3,2,0)),\ (\texttt{pure-$x$},(8,0,0))\bigr\}.
\]
If, in addition, the current exceptional divisor has (local) equation $x^2y=0$, i.e.\
$E=2E_x+1E_y$, then the boundary function is
\[
\partial(x)=2,\qquad \partial(y)=1,\qquad \partial(z)=0,
\]
and the state is $S=(\mathcal{I},\partial)$.  The associated boundary ideal is $I_{\mathbb{A}^3}(E)=(x^2y)$.

Define the \emph{order proxy} of $\mathcal{I}$ as minimum of the degrees of monomials in the monic hypersurface:
\[
\mathrm{ord}(\mathcal{I}) := \min_{1\le i\le N} |\mathbf{e}_i|.
\]
If $\mathcal{I}$ contains a \emph{pure} $z$-power, i.e.\ some $\mathbf{e}_i$ supported only on $z$, define
\[
\mathrm{ord}_z(\mathcal{I}):=\min\{\,e_{i,z}:\ (t_i,\mathbf{e}_i)\in\mathcal{I},\ e_{i,x_j}=0\ \forall j\,\}.
\]
At each step we use the \emph{exceptional exponent}
\begin{equation}\label{exc}
\mathrm{exc}(\mathcal{I}):=
\begin{cases}
\mathrm{ord}_z(\mathcal{I}), & \text{if a pure }z\text{-power occurs in }\mathcal{I},\\
\mathrm{ord}(\mathcal{I}),   & \text{otherwise}.
\end{cases}
\end{equation}

\subsection{Canonical center selection and blow-up}

Our simulator follows a fixed deterministic blow-up procedure (outside the evolve block). 
Given a state $S=(\mathcal I,\partial)$, we choose the next center $C(S)$ as follows.
Let $z$ be the elimination variable.
\begin{enumerate}[leftmargin=*]
\item If there exists a pure base monomial $x_j^{a}$ (i.e.\ an exponent vector supported on a single base variable), 
      choose such a $j$ with maximal exponent $a$, and set
      \[
      C(S)=V(x_j,z)\subset \mathbb{A}^d.
      \]
\item Otherwise, consider all base monomials in $\mathcal I$ (those with $e_z=0$). Choose one of minimal total degree 
      (ties are broken deterministically by the fixed input ordering), then choose a base variable $x_j$ appearing with maximal exponent 
      in that monomial, and set $C(S)=V(x_j,z)$.
\item If no base monomials exist (i.e.\ every monomial involves $z$), we use a \emph{divisor-type fallback} and set
      \[
      C(S)=V(z).
      \]
      In the implementation this should be understood as a deterministic \emph{normalization step in the $z$-direction} (a ``$z$-chart'' rule)
      rather than a geometrically meaningful blow-up of the ambient space along a Cartier divisor.
\end{enumerate}

The motivation for this center selection is the following. 
In characteristic-zero algorithms (e.g.\ Villamayor, Bierstone--Milman, W{\l}odarczyk), the next center is chosen canonically
as the smooth maximal stratum of a well-founded local invariant, with normal crossings conditions relative to the accumulated exceptional divisor.
Our center-selection rule is a deliberately crude Newton-combinatorial proxy for this philosophy: we treat $z$ as an elimination direction and 
typically force repeated interaction between $z$ and a single base direction by blowing up the codimension-$2$ coordinate stratum $V(x_j,z)$.

The fallback $C(S)=V(z)$ occurs only when no base monomials are present. 
Geometrically, blowing up an effective Cartier divisor is an isomorphism of the ambient space; however, it can still induce a nontrivial
transformation of the marked ideal/boundary data used by resolution invariants. 
This principle is standard in the marked-ideal formalism and is stated explicitly, for example, in W{\l}odarczyk's discussion of codimension-one
components: the blow-up is an isomorphism, but the marked ideal transforms nontrivially. 
In our toy model, the branch $C(S)=V(z)$ plays the role of such a divisor-type normalization step: the simulator continues to update the monomial
exponent data via the same deterministic rewrite rule, but we do not interpret this branch as a genuine birational modification of the ambient space.

Once the center is fixed, the simulator updates the state $S=(\mathcal I,\partial)$ and produces a new state
$S'=(\mathcal I',\partial')$ as follows. Set $\mathrm{exc}=\mathrm{exc}(\mathcal I)$.

\begin{enumerate}[leftmargin=*]
\item \textbf{Boundary update.}
\begin{itemize}[leftmargin=*]
\item If $C(S)=V(x_j,z)$, we first restrict the boundary to the variables meeting the center,
\[
\partial^{\mathrm{res}}(u)=
\begin{cases}
\partial(u), & u\in\{x_j,z\},\\
0, & \text{otherwise},
\end{cases}
\]
and then add the new exceptional contribution supported on $x_j$:
\[
\partial'(u)=\partial^{\mathrm{res}}(u)\ (u\neq x_j),
\qquad
\partial'(x_j)=\partial^{\mathrm{res}}(x_j)+\mathrm{exc}.
\]
\item If $C(S)=V(z)$, we restrict to the $z$-component
\[
\partial^{\mathrm{res}}(u)=
\begin{cases}
\partial(u), & u=z,\\
0, & \text{otherwise},
\end{cases}
\]
and then (in the implementation) we also add the exceptional contribution on $z$:
\[
\partial'(z)=\partial^{\mathrm{res}}(z)+\mathrm{exc},
\qquad
\partial'(u)=0\ \ (u\neq z).
\]
\end{itemize}

\item \textbf{Monomial exponent rewrite.}
Let $C(S)=V(x_j,z)$ or $C(S)=V(z)$, and let $v$ denote the corresponding chart variable, i.e.
\[
v=
\begin{cases}
x_j, & \text{if } C(S)=V(x_j,z),\\
z, & \text{if } C(S)=V(z).
\end{cases}
\]
For each tagged monomial $(t,\mathbf{e})\in\mathcal I$ with exponent vector $\mathbf{e}=(e_u)_u$, define a new exponent vector $\mathbf{e}'$
by the following deterministic rewrite:
\begin{itemize}[leftmargin=*]
\item (\emph{$z$-mixing into the $v$-chart}) If $e_z>0$, replace $e_v$ by $e_v+e_z$ (leaving all other coordinates unchanged).
      In particular, in the fallback case $v=z$ this replaces $e_z$ by $2e_z$.
\item (\emph{Divide out the exceptional factor}) Replace $e_v$ by $\max\{0,e_v-\mathrm{exc}\}$.
\item (\emph{Clean-up}) Remove all coordinates whose exponent becomes $0$; if all exponents become $0$ the monomial is discarded.
\end{itemize}
The resulting list of tagged monomials is $\mathcal I'$.
\end{enumerate}

Tags are propagated \emph{unchanged}: if $(t,\mathbf{e})\in\mathcal I$ then the transformed monomial in $\mathcal I'$ is recorded as $(t,\mathbf{e}')$.
Thus tags should be viewed as persistent labels describing the intended role/type of a monomial in the initial synthetic benchmark,
not as dynamically recomputed classifications of the transformed exponent vector.

\noindent \textbf{Iteration and stopping at monomial phase.}
Starting from an initial state $S_0=(\mathcal I_0,\partial_0)$ with $\partial_0\equiv 0$, we define a sequence
of states by repeatedly applying the canonical step
\[
S_{k+1} := \bigl(\mathcal I_{k+1},\partial_{k+1}\bigr)
\qquad\text{from}\qquad
S_k=(\mathcal I_k,\partial_k),
\]
using the center selection rule, boundary update, and monomial transform described above.
We stop either after a fixed step cap, or when a \emph{monomial phase} condition is met:
no monomial in $\mathcal I_k$ involves $z$, and all tags belong to a designated set of monomial tags
(as implemented).
Conceptually, this models reaching a regime where the remaining singularity is combinatorially monomial and is
expected to be tractable by purely toroidal methods.

\section{Feature selection} 
\label{sec:features}

Recall from Section \ref{sec:setup} that we built a test set of monic hypersurface singularities in the  affine ambient space
$\mathbb{A}^d=\mathrm{Spec} k[x_1,\dots,x_{d-1},z],
$
in characteristic $p=\mathrm{char}(k)$, with $z$ distinguished as an elimination variable.
Such a hypersurface is encoded by an ideal specification $\mathcal I=\{(t_i,\mathbf{e}_i)\}_{i=1}^N$ which is a finite tagged list 
$\mathbf{e}_i=(e_{i,1},\dots,e_{i,d-1},e_{i,z})\in\mathbb{N}^d$ and $t_i$ is a symbolic label ("pure power", "mixed",
"oblique", etc.). A state carries a boundary function $
\partial:\{x_1,\dots,x_{d-1},z\}\to \mathbb{N}$,
interpreted as multiplicities of exceptional components "$v=0$" through the current (simulated) point.
Under the canonical step with center $V(v,z)$, the boundary is (i) restricted to components meeting the center
and (ii) augmented by a new contribution supported on $v$ of weight $\mathrm{exc}(\mathcal I)$.

The blow-up simulator we built in  Section \ref{sec:setup} selects canonical centers and applies blow-ups at those: it updates $\mathcal{I}$ and the exceptional divisor data encoded by the boundary map $\partial$.
Most intrinstic invariants we feed to the evolving ranking function are \emph{proxies}:
they are combinatorial functions of exponent vectors (and the boundary) designed to correlate with familiar
geometric quantities used in resolution algorithms (order, $w$-ord, $E$-order, directrix dimension $\tau$,
Jacobian-type signals, Frobenius phenomena, etc.).
The goal of proxy design is pragmatic:
\begin{itemize}
\item the proxies should be intrinsic to the state $S=(\mathcal I,\partial)$ (no history dependence);
\item they should detect and penalize known failure modes in characteristic $p$ (long plateaux, Frobenius-flatness,
      sensitivity to tie-breaking, delayed shade effects);
\item they must support the evaluation harness constraint that monomial phase forces the primary rank
      component to vanish, and that certain alignment drops (notably in the order proxy) should be reflected by rank descent.
\item Boundary-aware proxies (notably $E$-order and $w$-ord surrogates) depend on $\partial$, and $\partial$ provides
a minimal intrinsic mechanism for remembering exceptional complexity without violating the purity requirement
on ranking functions.
\end{itemize}

\subsection{Feature list}\label{subsec:features} In this section we list our invariants which are used as features of the evolved ranking function.  
Fix a state $S=(\mathcal I,\partial)$ with elimination variable $z$.
Let
\[
\mathcal M=\{\mathbf{e}_i\}_{i=1}^N\subset\mathbb{N}^d
\qquad\text{and}\qquad
\mathcal{M}_{\mathrm{base}}=\{\mathbf{e}\in\mathcal M: e_z=0\}
\]
be the full and base exponent sets. For $\mathbf{e}\in\mathbb{N}^d$ write $|\mathbf{e}|=\sum_v e_v$.
Let $\mathrm{exc}(\mathcal I)$ be as in Section~\ref{sec:setup}.
We also use the \emph{initial set}
\[
\mathcal{M}_{\min}:=\{\,\mathbf{e}\in\mathcal{M}:\ |\mathbf{e}|=\mathrm{exc}(\mathcal I)\,\},
\]
i.e. the monomials of minimal total degree, except that in the monic regime $\mathrm{exc}(\mathcal{I})$ coincides with
the pure $z$-order by definition.
The features $f_0,\dots,f_{25}$ are listed in the following table, along with their Gemini-generated feature names and heuristic meaning. Some of them (e.g $f_{14}$) were suggested by the AlphaEvolve experiments and added to the initial feature set during the evolution of experiments. 

\begin{longtable}{@{}r p{0.28\textwidth} p{0.62\textwidth}@{}}
\toprule
Index & Feature name & Definition (proxy) and purpose \\
\midrule
\endhead

$f_0$ &
\texttt{max\_order} &
\textbf{Order proxy.}
$f_0=\mathrm{exc}(\mathcal{I})$, see \eqref{exc}

\emph{Purpose:} a primary descent target analogous to order/multiplicity. The harness imposes strong alignment
constraints when $f_0$ drops. \\[0.3em]

$f_1$ &
\texttt{elimination\_order} &
\textbf{Base order proxy.}
If $\mathcal M_{\mathrm{base}}\neq\varnothing$, set $f_1=\min_{\mathbf{e}\in\mathcal M_{\mathrm{base}}}|\mathbf{e}|$; otherwise $f_1=0$.

\emph{Purpose:} measures the smallest "coefficient" term (in the base variables) competing with the elimination term;
plateaux often occur when $f_1$ is close to $f_0$. \\[0.3em]

$f_2$ &
\texttt{dim\_max\_locus\_proxy} &
\textbf{Max-locus dimension proxy.}
Let $T$ be the set of variables $v$ such that $e_v>0$ for some $\mathbf{e} \in\mathcal M_{\min}$.
$f_2$ is the number of variables not in $T$ (i.e. free directions).

\emph{Purpose:} a crude surrogate for the dimension of the locus of maximal order. \\[0.3em]

$f_3$ &
\texttt{comp\_max\_locus\_proxy} &
\textbf{Max-locus component count proxy.}
$f_3=|\mathcal M_{\min}|$.

\emph{Purpose:} distinguishes single-branch vs.\ multi-branch initial data and stresses tie-breaking. \\[0.3em]

$f_4$ &
\texttt{boundary\_count} &
\textbf{Exceptional component count.}
Set $f_4:=|\{v:\partial(v)>0\}|$.
\emph{Purpose:} measures boundary complexity at the current stage; used as a mild regularizer and to break ties. \\[0.3em]

$f_5$ &
\texttt{shade\_penalty} &
\textbf{Shade proxy (small-degree mixed residuals).}
Count monomials with total degree in the window
$p\le |e|<2p$ and whose tag is mixed/oblique.

\emph{Purpose:} flags low-degree residual terms that can create delayed progress or temporary regressions, mimicking
the role of "shade"-type secondary invariants in positive characteristic heuristics. \\[0.3em]

$f_6$ &
\texttt{jacobian\_vanish\_flag} &
\textbf{Jacobian-vanishing boolean (Frobenius-flatness).}
Return $1$ iff for every monomial $\mathbf{e}\in\mathcal M$ and every exponent $e_v>0$ one has $p\mid e_v$.
Equivalently, all formal first partial derivatives of the monomials vanish in characteristic $p$.

\emph{Purpose:} detects purely inseparable initial behavior, a primary source of characteristic-$p$ difficulty. \\[0.3em]

$f_7$ &
\texttt{newton\_slope} &
\textbf{Newton-slope proxy.}
If a pure $z^{f_0}$ monomial exists, return $f_0/f_1$ with the convention $f_0$ if $f_1\le 0$; otherwise $0$.

\emph{Purpose:} a rough measure of relative steepness of the elimination direction compared to the base order,
often correlating with imminent decreases in weighted-order surrogates under the toy transform. \\[0.3em]

$f_8$ &
\texttt{e\_order\_boundary\_proxy} &
\textbf{Boundary $E$-order proxy.}
For each $\mathbf{e}\in\mathcal M_{\min}$ compute
$
E(\mathbf{e}):=\sum_{v\in\{x_1,\dots,x_{d-1}\}}\min(e_v,\partial(v)),
$
and set $f_8:=\min_{\mathbf{e}\in\mathcal M_{\min}} E(\mathbf{e})$.

\emph{Purpose:} mimics the contribution of exceptional components to the maximal-order locus and supports
boundary-sensitive descent criteria. \\[0.3em]

$f_9$ &
\texttt{monomial\_phase} &
\textbf{Monomial-phase detector.}
Return $1$ iff no monomial involves $z$ and all tags lie in the designated monomial-like tag set; otherwise $0$.

\emph{Purpose:} the harness enforces the normalization $f_9=1\Rightarrow \mathrm{rank}[0]=0$; this encodes the goal
of reaching a monomial/toroidal regime. \\[0.3em]

$f_{10}$ &
\texttt{inseparable\_initial\_flag} &
\textbf{Initial-form inseparability.}
Return $1$ iff for every $\mathbf{e}\in\mathcal M_{\min}$ and every coordinate $v$ with $e_v>0$ one has $p\mid e_v$.

\emph{Purpose:} distinguishes inseparability at the \emph{initial level} from higher-order mixed terms; this fixes an
earlier bug where inseparability was tested on all monomials rather than on $\mathcal M_{\min}$. \\[0.3em]

$f_{11}$ &
\texttt{plateau\_risk} &
\textbf{Plateau-risk heuristic.}
If $f_1=0$, return $f_0$; otherwise return $1/(1+|f_0-f_1|)$.

\emph{Purpose:} flags the regime $f_0\approx f_1$, which in our benchmarks correlates with long constant-order runs. \\[0.3em]

$f_{12}$ &
\texttt{frobenius\_defect} &
\textbf{Frobenius defect count.}
Count variables $v$ that appear in some monomial but whose exponents are all divisible by $p$ across \emph{all} monomials.
\emph{Purpose:} identifies coordinates invisible to first derivatives and often responsible for delayed descent. \\[0.3em]

$f_{13}$ &
\texttt{center\_complexity} &
\textbf{Center-selection stress proxy.}
Among pure base monomials $x_j^{a}$ (single base variable, excluding $z$), return $\max a$, and return $0$ if none exist.

\emph{Purpose:} correlates with how strongly the canonical center selection (Section~\ref{sec:setup}) is biased toward a
single coordinate line. \\[0.3em]

$f_{14}$ &
\texttt{weighted\_order\_proxy} &
\textbf{Boundary-aware weighted-order proxy ($w$-ord surrogate).} Assume the $z$-monic term is $z^{f_0}$. 
For each monomial $\mathbf{e}\in\mathcal M$ \emph{except} the pure $z^{f_0}$ term, set $e_z:=e_z$ and let $e^{\mathrm{base}}$
be the restriction of $e$ to base variables.
Define a residualized base exponent vector by
$
e^{\mathrm{res}}_v:=\max(0,e_v-\partial(v))\qquad (v\neq z).
$
If $e_z>0$, record $|e^{\mathrm{res}}|/e_z$; if $e_z=0$, record $|e^{\mathrm{res}}|/f_0$.
Set $f_{14}$ to the minimum recorded value.

\emph{Purpose:} provides a secondary descent target in constant-order regimes. The explicit exclusion of the pure
$z^{f_0}$ term is essential in monic inputs; otherwise the minimum would be trivially $0$. \\[0.3em]

$f_{15}$ &
\texttt{tau\_directrix\_proxy} &
\textbf{Directrix/$\tau$-proxy (base).}
Let $\mathcal M_{\mathrm{base},\min}:=\{\mathbf{e}\in\mathcal M_{\mathrm{base}}:\ |\mathbf{e}|=f_1\}$.
Return the number of base variables $x_j$ such that $e_j=0$ for all $\mathbf{e}\in\mathcal M_{\mathrm{base},\min}$.

\emph{Purpose:} a crude proxy for the codimension of the directrix (or, dually, the dimension of the space of
"free" directions) on the base initial form. \\[0.3em]

$f_{16}$ &
\texttt{e\_order\_elim} &
\textbf{Elimination-exponent proxy.}
Return $\min\{e_z>0:\ \mathbf{e}\in\mathcal M\}$, with the convention $0$ if no monomial involves $z$.

\emph{Purpose:} distinguishes monic $z^p$ regimes from higher $p^2$ regimes (e.g.\ $z^9$). \\[0.3em]

$f_{17}$ &
\texttt{embedding\_dim\_proxy} &
\textbf{Embedding-dimension proxy.}
Return the number of variables that occur with positive exponent in at least one monomial.

\emph{Purpose:} mild structural signal for eliminating irrelevant variables; mainly used as a tie-breaker. \\[0.3em]

$f_{18}$ &
\texttt{wildness\_index} &
\textbf{Wildness proxy (mixed, non-$p$-divisible exponents).}
Count tagged mixed/oblique monomials with $|\mathbf{e}|\ge p$ for which some exponent is not divisible by $p$.

\emph{Purpose:} separates Frobenius-flat mixed behavior from genuinely "wild" mixed terms (a common source of
non-monotone secondary invariants). \\[0.3em]

$f_{19}$ &
\texttt{base\_dim\_max\_locus\_proxy} &
\textbf{Max-locus dimension proxy on the base.}
Repeat the construction of $f_2$ using only $\mathcal M_{\mathrm{base}}$ and only base variables.

\emph{Purpose:} focuses on coefficient/companion data when $z$-monicity masks geometry at the total level. \\[0.3em]

$f_{20}$ &
\texttt{base\_comp\_max\_locus\_proxy} &
\textbf{Component count proxy on the base.}
Return $|\mathcal M_{\mathrm{base},\min}|$.
\emph{Purpose:} distinguishes single vs.\ multiple minimal base constraints, important in cross/competition families. \\[0.3em]

$f_{21}$ &
\texttt{hilbert\_samuel\_base\_value} &
\textbf{Hilbert-Samuel proxy from the base initial monomial ideal.}
Let $I_{\mathrm{base}}\subset k[x_1,\dots,x_{d-1}]$ be the monomial ideal generated by $\mathcal M_{\mathrm{base},\min}$.
Set $n:=f_1+1$ and define
$
H(n):=\#\{\,\text{base monomials of total degree }<n\text{ not contained in }I_{\mathrm{base}}\,\}.
$
Return $f_{21}:=H(n)$.

\emph{Purpose:} a richer "size" signal than base order alone; sensitive to cross/competition and helps break
plateaux where $f_1$ is constant. \\[0.3em]

$f_{22}$ &
\texttt{jacobian\_min\_order} &
\textbf{Jacobian-order proxy (nonzero partials only).}
For each monomial exponent $\mathbf{e}\in\mathcal M$ and each variable $v$ with $e_v>0$ and $p\nmid e_v$, form the derivative
exponent vector $\mathbf{e}-\mathbf{1}_v$ (subtracting $1$ in the $v$-coordinate). Return the minimum of $|\mathbf{e}-\mathbf{1}_v|$.
If no such pair exists, return a large sentinel.

\emph{Purpose:} quantifies how "high" the first nonzero Jacobian information appears in characteristic $p$. \\[0.3em]

$f_{23}$ &
\texttt{jacobian\_nonzero\_partials} &
\textbf{Jacobian-density proxy.}
Count pairs $(\mathbf{e},v)$ with $\mathbf{e}\in\mathcal M$, $e_v>0$, and $p\nmid e_v$.

\emph{Purpose:} measures how many first partials are nonzero at the monomial level; low density correlates with
hard purely inseparable behavior. \\[0.3em]

$f_{24}$ &
\texttt{padic\_depth\_initial} &
\textbf{$p$-adic depth on the initial set.}
For each $\mathbf{e}\in\mathcal M_{\min}$ and each coordinate $v$ with $e_v>0$, compute $\nu_p(e_v)$ and return the minimum.

\emph{Purpose:} detects deeper $p$-power structure (e.g.\ $p^2$-type exponents) already present in the initial data. \\[0.3em]

$f_{25}$ &
\texttt{boundary\_mult\_sum} &
\textbf{Total boundary mass.}
 $f_{25}=\sum_v \partial(v)$.
\emph{Purpose:} captures cumulative exceptional complexity beyond the mere count $f_4$, and is used by evolved rankers to discourage spending too much boundary mass without progress in earlier components. \\

\bottomrule
\end{longtable}

We implemented a harness-imposed alignment:
the evaluation harness enforces an order alignment constraint. This means that whenever the canonical step strictly decreases
the primary order proxy $f_0$, the produced rank must strictly decrease at that step.
A weaker alignment penalty is also applied to decreases of the weighted-order proxy $f_{14}$.
These constraints were essential in practice to prevent the search from learning rankers that ignore genuine
progress signals while optimizing only for delayed or secondary effects.

\section{Evaluation harness and scoring system}

AlphaEvolve is tasked with evolving a pure function
\[
R:\mathbb{R}^{26} \longrightarrow \mathbb{R}^k,\qquad
R(\mathbf{f})=(r_0(\mathbf{f}),\dots,r_{k-1}(\mathbf{f})),
\]
where $\mathbf{f}\in\mathbb{R}^{25}$ denotes the feature vector extracted from a simulator state.
The final ranking function was set to be the lexicographic order on $\mathbb{R}^k$.

A hard boundary constraint is imposed using the monomial-phase indicator $f_9\in\{0,1\}$ (Section~\ref{sec:features}):
\begin{equation}\label{eq:monomial-normalization}
f_9=1 \ \Longrightarrow\ r_0(\mathbf{f})=0,
\qquad\qquad
f_9=0 \ \Longrightarrow\ r_0(\mathbf{f})>0.
\end{equation}
This enforces the normalization that the terminal regime is the monomial phase and it is detected by the leading rank component
vanishing exactly, while every non-monomial state carries strictly positive leading value.

Finally, we enforce \emph{purity}: the function $R$ must be deterministic and depend only on the input feature vector.
In particular, $R$ may not use global mutable state, randomization, wall-clock time, or any form of history-memory. To make this rigorous, we let the evaluation harness check determinism by evaluating $R$ twice on the same feature vector
and requiring identical outputs.

For each test ideal, we simulate a trajectory of states
\[
S_0 \to S_1 \to \cdots \to S_T,
\qquad T\le K,
\]
where $K$ is a fixed step cap (in the main experiments, $K=30$), and simulation stops early if monomial phase is reached.
Let $\mathbf{f}_t\in\mathbb{R}^d$ denote the feature vector extracted from $S_t$, and write
\[
R_t := R(\mathbf{f}_t)\in\mathbb{R}^k.
\]
All scoring rules below are evaluated on the finite sequence $(R_t)_{t=0}^T$ together with $(\mathbf{f}_t)_{t=0}^T$.

The harness computes a nonnegative \emph{violation count} for each test instance. A violation occurs whenever one of the following conditions fails:
\begin{enumerate}[leftmargin=2em]
\item 
The function $R$ must return a $k$-tuple of finite real numbers at every simulated step.
Crashes, NaNs/Infs, or non-tuples are treated as structural failures and incur a large fixed penalty.

\item 
Condition \eqref{eq:monomial-normalization} is enforced at every step $t$.

\item The bounded-delay ranking condition as described in Section \ref{sec:background}. Fix a window length $m\in\mathbb{N}$ (typically $m=5$ in the main runs).
Define the running best value (lexicographic minimum)
\[
B_t := \min\nolimits_{\mathrm{lex}}\{R_0,\dots,R_t\}.
\]
We require that before monomial phase, the sequence $(B_t)$ improves at least once in every block of length $m$:
equivalently, writing $\tau$ for the first index with $f_9(\mathbf{f}_\tau)=1$ (or $\tau=T+1$ if monomial phase is never reached),
we require that for every $t<\tau$ there exists $s\in\{t+1,\dots,\min(t+m,\tau)\}$ such that
\[
B_s <_{\mathrm{lex}} B_{s-1}.
\]
The harness tracks the index of the most recent strict improvement of $B_t$ and increments the violation count if the
gap reaches $m$ while still in non-monomial phase. 

\item Certain feature drops are treated as \emph{immediate progress signals} and are required to be reflected by an immediate rank decrease.
Specifically, if the primary order proxy $f_0$ satisfies $f_0(\mathbf{f}_t)<f_0(\mathbf{f}_{t-1})$, then we impose the penalty
\[
R_t \not<_{\mathrm{lex}} R_{t-1} \quad\Longrightarrow\quad \text{add a (heavier) violation increment.}
\]
Similarly, if the boundary-aware weighted-order proxy $f_{14}$ decreases, $f_{14}(\mathbf{f}_t)<f_{14}(\mathbf{f}_{t-1})$, then
failure of $R_t<_{\mathrm{lex}}R_{t-1}$ incurs a (lighter) penalty.
These alignment constraints are not asserted as mathematical truths about genuine resolution invariants;
they are a deliberate design choice to prevent the evolutionary search from ignoring clear proxy progress.

\end{enumerate}

\paragraph{Secondary diagnostics (tie-breakers).}
In addition to violations, the harness records:
\begin{itemize}[leftmargin=2em]
\item the number of indices $t$ with $R_t>_{\mathrm{lex}}R_{t-1}$ (local increases), and
\item the maximal length of a plateau, i.e.\ the largest number of consecutive indices with $R_t=R_{t-1}$.
\end{itemize}
These quantities are used as secondary objectives (tie-breakers) and as diagnostic outputs; they do not replace the feasibility
constraints above.

To stabilize search and reduce overfitting to a small subset of test instances, we evaluate the same candidate $R$
on a three-stage curriculum: an easy prefix of the test set, a medium prefix, and the full suite.
Each stage contributes additively to the primary violation total, with increasing weights.
The harness reports multiple objectives, including the weighted total violation count and a worst-case (max-per-instance) violation count,
as well as per-stage breakdowns.

Across the sequence of experiments described in Section~\ref{sec:how-run}, the scoring system was refined iteratively.
Early versions required strict lexicographic descent at every step and were therefore too rigid in characteristic-$p$ plateau regimes.
Later versions introduced the bounded-delay best-so-far descent criterion, the hard normalization \eqref{eq:monomial-normalization},
purity checks to rule out stateful cheats, and curriculum staging with explicit alignment penalties.
The final scoring used in Section \ref{sec:results} is the outcome of this iterative redesign.

\section{Evolution of experiments and dead ends}\label{section:experimentI}

In this section we summarize the main iterations of the experiment, the lessons we learned and improvements/adjustments we made along the way.

\subsection{Plateau budgets and the meaning of $m$}
Our earliest evaluation harness asked strict lexicographic decrease at every step.
This is the natural termination mechanism in characteristic zero, but in positive characteristic
it is too strong even for our toy simulator: many trajectories contain long stretches where the most
obvious proxies do not change, or even move in the
wrong direction for several steps before improving.
In such a regime, a strict stepwise requirement makes it nearly impossible for the evolutionary
search to make progress: any candidate ranker that tolerates a temporary plateau is immediately
discarded as non-terminating.

The bounded-delay criterion of Definition~\ref{def:bounded-delay} was introduced precisely to
make the search robust to this phenomenon.
Operationally, it replaces the requirement $\mathcal R(s_{k+1})<\mathcal R(s_k)$ for all $k$
by the weaker requirement "the best-so-far rank must improve at least once every $m$ steps."
In the focused $p=3$, $d=4$ benchmark the final evolved rankers exhibit very short plateaux:
\begin{itemize}[leftmargin=*]
\item for the continuous two-component ranker of Section~8, the worst plateau length is $2$;
\item for the discretized ranker of Section~\ref{subsec:discretize}, the worst plateau length is again $2$
(and in stages 1 and 2 it drops to $1$).
\end{itemize}
By contrast, on the earlier broad benchmark (Section \ref{sec:toobroad}) the best evolved candidate we
observed still exhibited plateaux as long as $28$ steps, producing many bounded-delay violations.

Choosing $m=5$ strikes a pragmatic balance in our toy setting: it is short enough to
exclude genuine cycles, but long enough to allow the kind of bounded kangaroo behavior that is
known to occur for natural invariants in positive characteristic~\cite{Hauser2008}, see section \ref{sec:background}.

\subsection{Intrinsic ranking functions and the absence of memory}
A natural idea when confronted with plateaux is to add stateful corrections:
for instance, to store the previous best rank and penalize returning to it, or to smooth oscillations
by a moving average of recent feature values.
However, our setting explicitly forbids such mechanisms: the ranking function must be pure
(no dependence on history, global variables, or randomness).
This constraint mirrors the mathematical requirement that a termination invariant should be an
intrinsic function of the current singularity data.

The way out is to enrich the \emph{intrinsic} state.
In genuine resolution algorithms the exceptional divisor carries the memory of past blow-ups;
accordingly, one of the most important shifts in our experiments was to make several feature
definitions \emph{boundary-aware} and to add new boundary features.
Concretely, the final successful rankers use:
\begin{itemize}[leftmargin=*]
\item a nontrivial boundary count and boundary multiplicity sum (features $f_4$ and $f_{25}$),
\item a boundary-aware $E$-order proxy (feature $f_8$),
\item a corrected boundary-aware weighted order proxy $w$-ord (feature $f_{14}$),
\end{itemize}
together with Jacobian and Hilbert-Samuel proxies (features $f_{21}$-$f_{24}$) that act as
plateau breakers in purely inseparable regimes.

Empirically, once these boundary and structural signals were present, AlphaEvolve was able to find
rankers whose trajectories almost never stagnate: on the final $100$-case benchmark the maximum plateau
length dropped to $2$ and all bounded-delay violations vanished (Section~\ref{subsec:discretize}).
This supports the general heuristic that in positive characteristic one should not expect order-type
invariants alone to control termination; boundary and Frobenius-sensitive data must be built into the
state from the start.

\subsection{Why the broad benchmark was too ambitious}
\label{sec:toobroad}
Our first benchmark collection mixed several characteristics ($p\in\{2,3,5,7\}$), several ambient
dimensions, and many qualitatively different families of hypersurface singularities. 
This was useful for quickly surfacing failure modes, but it turned out to be too ambitious for
feature search: the optimization landscape was dominated by mutually incompatible behaviors.

A representative best candidate produced by AlphaEvolve after a long run on this broad benchmark
still had very poor global behavior:
it solved only $14$ instances outright, accumulated a total of $1668$ bounded-delay violations,
and exhibited a maximum plateau length of $28$ steps.
Moreover, the failures were not specific to a single pathology; rather, different subfamilies
failed for different reasons.

This experience motivated a change in experimental strategy.
We restricted attention to a fixed ambient dimension $4$ and a fixed characteristic $p=3$,
and we built a \emph{focused} benchmark of $100$ instances of the form "monic purely inseparable
$z^3$ with base competition", designed to remain nontrivial while being sufficiently coherent for
evolutionary search.
Within this restricted regime, both the continuous and discretized rankers achieve perfect scores, and the remaining failure modes are subtle enough to serve as
targets for future refinement rather than as overwhelming noise.

\section{Focused benchmark for dimension 4 and characteristic 3}\label{sec:results}

In this section we present the results of our experiments. We find counterexample of initial perfect ranking function evolved on a first test set, and describe how this let to an improvement of the experiment. We formulate two conjectures which we consider the biggest results of this work.  
\subsection{First experiment: perfect ranking function on initial test set} \label{subsec:experiment1}
We constructed a robust test set of $71$ hypersurface singularites, including:
\begin{itemize}[leftmargin=2em]
\item Frobenius-flat plateau crosses with base exponents multiples of $3$ (maximizing Jacobian vanishing),
\item permutations of monomial ordering to prevent tie-breaking artifacts in center selection,
\item immediate and delayed shade surrogates: mixed monomials whose degrees enter the critical window $[p,2p)$ after several transforms,
\item initial-form perturbations of the form $z^2\cdot(\text{base})$ to break inseparability at order $3$,
\item toroidal/quasi-monomial cores with small oblique perturbations,
\item optional order $p^2$ holdouts (e.g.\ $z^9$) to probe a harder regime.
\end{itemize}

On this focused benchmark, after 26 hours AlphaEvolve found (see Appendix ~A) a pure two-component ranking function 
\[
\mathcal{R}(x) = (c_1(x), c_{\mathrm{combined}}(x))
\]
where the evolved program internally computes components $c_2,c_3,c_4,c_5$ and then forms the weighted sum
\[
c_{\mathrm{combined}}
=
284669250\,c_2
+
5581750\,c_3
+
250\,c_4
+
c_5
\]
Here (writing $f_i$ for the features):
\begin{align*}
c_1 &= 
\begin{cases}
0 & \text{if } f_9 = 1\ \text{(monomial phase)},\\
f_0 + 0.25 & \text{if } f_9 = 0,
\end{cases}
\\[2mm]
c_2 &= f_{14} \qquad\text{(boundary-aware } w\text{-ord proxy)},\\[1mm]
c_3 &= 50\cdot \tanh \left(\frac{1}{5}\Big(f_{21} + 0.1 f_{19} + 0.05 f_{20} + f_{10} - J + P\Big)\right),\\
&\hspace{1.2cm}\text{with }J=(1-f_{23})\left(1+\frac{f_{22}}{5}\right),\quad
P=-5\cdot \operatorname{atan2}\!\left(\frac{f_{24}}{10},\frac{f_{21}}{25}\right),\\[2mm]
c_4 &= 0.15 f_1 - 1.5 f_7 - \exp(0.1 f_{25}) + 0.2 f_8,\\[1mm]
c_5 &= 0.5 f_5 + 0.5 f_{18} - 0.1 f_4.
\end{align*}

On the focused $71$-case benchmark, the scalarized two-component ranking function $R$ achieves zero violations with window $m=5$:

\begin{center}
\begin{tabular}{@{}lr@{}}
\toprule
Metric & Value \\
\midrule
Total violations  & $0$ \\
Solved cases (full stage) & $71/71$ \\
Total local increases & $206.5$ \\
Max plateau length & $2$ \\
\bottomrule
\end{tabular}
\end{center}

We see the weights in $c_{combined} $ in practice suggest a lexicographic order, so it is natural to ask whether the explicit lex tuple 
\begin{equation}\label{rlex}
    \mathcal R_{\mathrm{lex}}=(c_1,c_2,c_3,c_4,c_5)
\end{equation}
is itself a bounded-delay Lyapunov function for broader classes of inputs.
The answer is \emph{no} without further restrictions: outside the focused test family, one can force the second component $c_2$ (the boundary-aware weighted order proxy) to attain its minimum value $0$ transiently, after which lexicographic improvement becomes impossible unless $c_2$ returns to $0$ and one improves $c_3$.
This can lead to arbitrarily long frozen best-so-far plateaux. Here is a counterexample we found using GPT 5.2 Pro for $m=10$.

\begin{lemma}[Counterexample to bounded-delay for the lex tuple]
\label{prop:clean-lex-counterexample}
Consider the following hypersurface with elimination variable $z$:
\[
f = z^3 + x^7 + z^5 y^2 + z^3 x^2 w^4 y^6 + z^3 x^2 w^6 y^3 + z^4 w^6 y^5.
\]
In the toy canonical simulator of Section~\ref{sec:setup}, the clean lex tuple rank $\mathcal R_{\mathrm{lex}}$ in \eqref{rlex} violates the bounded-delay condition not only for $m=5$ but also for $m=10$.
\end{lemma}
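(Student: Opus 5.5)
The proof is a direct, finite computation. I would run the deterministic simulator of Section~\ref{sec:setup} by hand on the given input, starting from the state $S_0=(\mathcal I_0,\partial_0)$ with $\partial_0\equiv 0$. We fix the variable order $(x,y,w,z)$, so that $d=4$ and $p=3$. The ideal specification $\mathcal I_0$ consists of the exponent vectors
\[
(0,0,0,3),\ (7,0,0,0),\ (0,2,0,5),\ (2,6,4,3),\ (2,3,6,3),\ (0,5,6,4).
\]
Tags are irrelevant for $\mathcal R_{\mathrm{lex}}$ except through $f_5,f_9,f_{18}$, so we fix a convention such as \texttt{mixed} for all non-pure terms.

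\textbf{Step 1: compute the first canonical steps.} Since $x^7$ is a pure base monomial, the first center is $V(x,z)$. The exceptional exponent is $\mathrm{exc}=\mathrm{ord}_z=3$, so $\partial_1(x)=3$. The rewrite sends $z^3\mapsto z^3$, since $x$ receives $3$ and then loses $3$. The remaining terms transform as
\[
x^7\mapsto x^4,\qquad z^5y^2\mapsto x^2y^2z^5,\qquad z^3x^2w^4y^6\mapsto x^2w^4y^6z^3,
\]
and similarly for the others. Iterating, $x^7\mapsto x^4\mapsto x$. After that the pure base monomial $x$ persists, with $x^1\mapsto x^{\max(0,1-3)}$ dropping out, so it disappears. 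The key observation is that the $x$-exponents of the $z$-terms grow by $e_z-3$ at every step: by $2$ for $z^5$ and by $1$ for $z^4$, while the $z^3$-terms are stable. Once the base monomials are exhausted, the fallback $C=V(z)$ fires. It doubles $e_z$ and subtracts $\mathrm{exc}=3$, so $z^3\mapsto z^3$ and $z^e\mapsto z^{2e-3}$, and the $z$-boundary accumulates $+3$ each step.

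\textbf{Step 2: identify the frozen plateau.} Under either rule the pure $z^3$ term survives forever, so $f_9=0$ and $c_1=f_0+0.25=3.25$ stays constant. I would then compute $f_{14}$ along the trajectory. Early on, the boundary $\partial(x)=3$ residualizes the $x$-exponents in the $z^3$-terms: $(2,6,4,3)$ has base residual $|(0,6,4)|=10$, giving ratio $10/3$. The term that must drive $c_2$ to its minimum $0$ is the base monomial $x^1$. Its residualized exponent $\max(0,1-\partial(x))=0$ gives $f_{14}=0/f_0=0$ at the step where $x$ is present with $\partial(x)\ge1$, which is the transient zero described before the statement.

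After $x$ disappears, $f_{14}$ becomes strictly positive again. The $z$-terms have base parts with $y,w$ exponents never reduced, since the boundary lives only on $x$ or $z$ and $y,w$ are never chart variables. Their ratios are at least $\min(10/3,\,9/3,\,11/4,\dots)>0$. So the best-so-far $B_t$ records $c_2=0$ at some step $t_0$, and every later rank has $c_2>0$, hence is lexicographically larger than $B_{t_0}$. Consequently $B_t=B_{t_0}$ for all $t>t_0$, and since monomial phase is never reached, the gap eventually exceeds $10$ within the step cap $K=30$. That is a bounded-delay violation for $m=10$, and a fortiori for $m=5$.

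\textbf{Main obstacle.} The hard part is the bookkeeping in Step 1. I need to determine exactly when the fallback branch starts and verify that no state after $t_0$ reaches $c_2=0$ again; otherwise $c_3$ could produce an improvement. I would first check whether some $z$-term can acquire zero residual base degree. This is impossible because the $y$-exponents $2,6,3,5$ are positive and never decrease, which yields a uniform positive lower bound on $f_{14}$. It remains to confirm that $T\ge t_0+11\le 30$ by explicitly tabulating roughly the first $15$ states, which I would do by running the simulator code.
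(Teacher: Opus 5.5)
Your proposal is correct and follows the paper's own argument: simulating the run gives $f_{14}=0$ exactly at step $2$ (from the surviving $x^1$ with $\partial(x)=6$), $c_1\equiv 3.25$ throughout because $z^3$ is fixed by both rewrite rules, and $c_2>0$ at every later step because the $y$-exponents are never residualized, so the best-so-far rank stays frozen from step $2$ up to the cap $K=30$. One small correction: there is no uniform positive lower bound on $f_{14}$ after step $2$, since the term coming from $z^5y^2$ gives the ratio $2/5$ at step $3$ and then $8/(2^{t-2}+3)\to 0$ once the fallback $V(z)$ starts doubling $e_z$ (and resets $\partial(x)$ to $0$); however, the lexicographic comparison only needs $c_2>0$ at each individual step, so your conclusion stands.
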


\begin{proof}[Proof sketch]
We verified this computationally by simulating $30$ canonical steps and tracking $\mathcal R_{\mathrm{lex}}$.
Along the resulting trajectory, the weighted-order component $c_2$ drops to $0$ at step $2$, while subsequent steps have $c_2>0$ for an extended stretch.
Since $c_2\ge 0$ by construction, the best-so-far lex value becomes locked at $c_2=0$ and no improvement occurs for more than $10$ consecutive steps, meaning  a violation for $m=10$.
\end{proof}

The counterexample above uses monomials with additional $z$-powers and is far outside the monic purely inseparable Weierstrass shape of our focused benchmark (pure $z^3$ plus base competition and mild $z^2$-perturbations).
It remains plausible that $\mathcal R_{\mathrm{lex}}$ is a delayed ranking function on a restricted family, for example polynomials of the form
\[
z^3 + g(x,y,w) + z^2 h(x,y,w),
\]
with $g$ in a plateau/cross regime.
We therefore record the following as a working conjecture.

\begin{conjecture}[Clean lex tuple on the focused Weierstrass family]
\label{conj:clean-lex-weierstrass}
On the monic Weierstrass family $z^3+g+z^2h$ described above, the clean lex tuple $\mathcal R_{\mathrm{lex}}=(c_1,c_2,c_3,c_4,c_5)$ satisfies the bounded-delay descent condition for some universal window $m$ (empirically $m=5$).
\end{conjecture}

We finally add some intuition on the evolved ranking function. Component $c_3$ is seems to be the most novel: it blends a Hilbert-Samuel signal ($f_{21}$), base max-locus complexity ($f_{19},f_{20}$),
an initial inseparability flag ($f_{10}$), Jacobian information ($f_{22},f_{23}$), and a bounded "phase potential" built from
$(f_{21},f_{24})$ via $\operatorname{atan2}$ and then saturated by $\tanh$.
This appears to provide a stable plateau breaker when $f_0$ and $w$-ord plateau.

\subsection{Improved experiment:  discretization}\label{subsec:discretize}

The evolved ranker of Section  returns a pair $(c_1,c_{\mathrm{comb}})\in \mathbb R^2$ whose
second coordinate is a weighted sum of four logically distinct components.
For mathematical use it is preferable to work with an explicit \emph{lexicographic tuple} whose
entries correspond to recognizable geometric signals (compare the characteristic-zero invariants
in Section~2.1).
This motivates two additional steps: we have rewritten the evolve block so that it evolves a  tuple $(c_1,c_2,c_3,c_4,c_5)$ and a discretization  map $\Pi: \mathbb{R}^5 \to \mathbb N^5$ so that the target is a well-founded ordered set.

After 24 hours of running, in the final successful lexicographic version (Appendix~B) the components have the following evolved form:


\[
\begin{aligned}
c_1 &=
\begin{cases}
0, & \text{if } f_9=1,\\
f_0, & \text{if } f_9\neq 1,
\end{cases}
\\[4pt]
c_2 &= \tfrac12 f_{14}+\tfrac12 f_{21}+0.05\,f_1+0.01\,f_5,\\[4pt]
c_3 &= f_{10}+f_{19}+0.1\,f_{20},\\[4pt]
c_4 &=-\Bigl(4\,f_{24}^3+f_{25}+5(1-f_{23})f_{24}+10\,f_{10}\,f_{24}(1-f_{23})\Bigr),\\[4pt]
c_5 &= f_{18}+\tfrac12 f_8.
\end{aligned}
\]
The evolved discretization function 
$\Pi\colon \mathbb R^5\to\mathbb N^5$ is the following:
\begin{align*}
d_1 &:= \lfloor c_1\rfloor,\\
d_2 &:= \lfloor 100\cdot c_2\rfloor,\\
d_3 &:= \lfloor 10\cdot (c_3+50)\rfloor,\\
d_4 &:= 5000-\Big\lfloor 100\cdot \log\bigl(1+\max(0,-c_4)\bigr)\Big\rfloor,\\
d_5 &:= \lfloor 10\cdot (c_5+20)\rfloor.
\end{align*}
The logarithmic compression in $d_4$ is meant to make very negative $c_4$ values comparable on a
finite scale, while preserving the monotonicity, i.e. more negative $c_4$ is better.

On the focused $71$-case benchmark (dim $4$, $p=3$, monic purely inseparable $z^3$ with base
competition) the discretized rank
\[
\mathcal R_{\mathrm{disc}} := \Pi\circ (c_1,c_2,c_3,c_4,c_5)
\]
achieves a perfect score under the bounded-delay criterion with $m=5$:
\begin{center}
\begin{tabular}{@{}lr@{}}
\toprule
Metric & Value \\
\midrule
Total violations  & $0$ \\
Solved cases (full stage) & $71/71$ \\
Total local increases & $344.5$ \\
Max plateau length & $2$ \\
\bottomrule
\end{tabular}
\end{center}

The total number of non-improving steps increased to $344.5$. Nevertheless, the discretized lex tuple retains the key qualitative behavior: frequent improvements and no extended stagnation. Some intuitive explanation for the roles of the components are as follows:
\begin{itemize}[leftmargin=*]
\item $c_1$ enforces the \emph{monomial phase rule}: $c_1=0$ in monomial phase and $c_1>0$ otherwise.
\item $c_2$ is the primary "plateau breaker", balancing the boundary-aware weighted order proxy
($w$-ord) with the Hilbert-Samuel base proxy and small contributions from elimination order and shade.
\item $c_3$ tracks inseparability and the dimension/complexity of the maximal locus in the base.
\item $c_4$ is an inverted "depth/complexity" term, dominated by a cubic function of the $p$-adic
depth and by the boundary multiplicity sum, and further amplified when the Jacobian vanishes.
\item $c_5$ is a low-priority tie-breaker penalizing wildness and (boundary-aware) $E$-order.
\end{itemize}

The following explicit counterexample, found by GPT 5.2 Pro shows that the present
$\Pi$ does \emph{not} extend to arbitrary inputs of the same ambient parameters.

\begin{lemma}[A counterexample outside the benchmark]\label{thm:counterexample}
Let $p=3$ and consider the hypersurface with elimination variable $z$:
\[
z^3 + x^{12} + y^{6} + w^{9}y^{4} + x^{9}y^{8}w^{10}.
\]
Then, under the toy canonical blow-up simulator, the discretized rank
$\mathcal R_{\mathrm{disc}}$ above violates the bounded-delay condition with window $m=5$.
\end{lemma}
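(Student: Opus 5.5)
The plan is to verify the claim by running the deterministic simulator of Section~\ref{sec:setup} by hand (or by machine) on the given input, computing the features and $\mathcal R_{\mathrm{disc}}$ along the trajectory, and locating a window of five consecutive pre-monomial steps with no strict lexicographic improvement of the running best $B_k$. The initial state is $\mathcal I_0=\{z^3,\ x^{12},\ y^6,\ w^9y^4,\ x^9y^8w^{10}\}$ with $\partial_0\equiv 0$. Since $z^3$ is a pure $z$-power, $\mathrm{exc}=3$ throughout, as long as $z^3$ survives.

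\textbf{Step 1: simulate the trajectory.}
\begin{itemize}
\item \emph{Step $0$.} Rule (1) applies: the pure base monomials are $x^{12}$ and $y^6$, and the maximal exponent selects $x$. The center is $V(x,z)$, and the boundary becomes $\partial(x)=3$.
\item \emph{Rewrite of $z^3$.} The term mixes to $x^3z^3$, and dividing by $x^3$ returns $z^3$. So the monic term is stable.
\item \emph{Rewrite of the base monomials.} Each loses $3$ from its $x$-exponent. Thus $x^{12}\mapsto x^9\mapsto x^6\mapsto x^3\mapsto$ discarded, and $x^9y^8w^{10}\mapsto x^6y^8w^{10}\mapsto\cdots$.
\item \emph{Next phase.} After the $x$-exponents are exhausted, the rule switches to $y$ (via $y^6$), then to $w$ or $y$ according to $w^9y^4$, and so on.
\end{itemize}
I will tabulate each state $S_k$ explicitly, noting where the boundary restriction resets $\partial$ to the single chart variable. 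The trajectory has roughly $4+2+3+\dots$ steps before the base monomials are exhausted. After that the fallback $V(z)$ applies, which sends $z^3\mapsto z^6\mapsto z^3$, an infinite fixed loop. Monomial phase is therefore never reached, because $z$ persists.

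\textbf{Step 2: compute the ranks in the fixed loop.} Once only $z^3$ remains, every feature is constant:
\begin{itemize}
\item $f_{14}$ is the minimum over an empty set; I must use the harness convention for this.
\item $f_1=f_{21}=0$.
\item $\partial(z)$ accumulates through the repeated $+3$, so $f_{25}$ grows.
\end{itemize}
Since $c_4$ enters as $-f_{25}$, growing $f_{25}$ makes $d_4$ \emph{decrease}, which would count as an improvement. I must therefore check carefully whether the boundary update actually sets $\partial'(z)=\partial(z)+3$ in the fallback branch; the text says it does. If it does, $d_4=5000-\lfloor100\log(1+\dots)\rfloor$ decreases only logarithmically. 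Even so, it drops by at least one unit per step for a long time, so the violation must come from elsewhere. The most likely place is the leading components.

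\textbf{Step 3: find the frozen best-so-far window.} The mechanism is the same as in Lemma~\ref{prop:clean-lex-counterexample}. At some step during the transition between pure-base phases, $d_2=\lfloor 100c_2\rfloor$ attains a small value, for instance when $f_{14}$ and $f_{21}$ are both small at the moment $y^6$ has been reduced. In the subsequent steps $d_2$ is strictly larger, because the remaining high-degree term $w^9y^4$ or $x^9y^8w^{10}$ becomes the minimal base monomial and raises $f_1$ and $f_{21}$. The best-so-far value $B_k$ is then locked at the earlier lower $d_2$, and subsequent improvements in $d_3$, $d_4$, $d_5$ are irrelevant lexicographically. I will exhibit explicit indices $k,\dots,k+5$ with $d_1$ constant, equal to $3$, and $d_2>\min_{j\le k} d_2$ throughout, so that $B_{k+5}=B_k$.

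\textbf{Main obstacle.} The delicate part is the exact bookkeeping: tie-breaking order, the clean-up rule, the boundary restriction, and the convention for $f_{14}$ on empty or degenerate sets. These determine whether $d_2$ genuinely rebounds above its running minimum for five consecutive steps. In practice I would confirm this by running the harness code, exactly as in the proof of Lemma~\ref{prop:clean-lex-counterexample}, and then present the tabulated trajectory as the certificate.
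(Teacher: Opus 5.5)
Your plan is the paper's argument: a direct simulation in which the running best freezes at a small $d_2$. The paper records $(3,915,522,5000,220)$ at step $4$, just after $y^6$ has been reduced to $y^3$, and the next five steps all have larger $d_2$; for instance step $9$ gives $(3,999,511,4770,210)$, where $d_3,d_4$ improve but $d_2$ blocks. So the terminal $V(z)$ loop is irrelevant, as you suspected.

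When you tabulate, the centers are not four consecutive $x$-blow-ups: once $x^{12}$ is down to $x^3$, $y^6$ wins rule~(1), and with input-order tie-breaking the sequence is $x,x,x,y,x,y,w,w,w$. Also, at steps $5$, $8$, $9$ one still has $f_1=3$. There $d_2$ exceeds $915$ only by thin margins: they come from $f_{21}$ rising from $18$ to $19$, and at steps $5$ and $9$ from $f_{14}$ becoming positive, not from a higher-degree minimal base monomial.
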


\begin{proof}[Proof sketch]
A direct simulation shows that the discretized ranks at steps begin as
\begin{multline}
(3,4280,531,5000,220),\ (3,4280,531,5000,220),\ (3,4130,522,5000,220),\\ \nonumber (3,965,531,5000,220),\ (3,915,522,5000,220),
\end{multline}
followed by five consecutive steps whose ranks are \emph{not} lexicographically smaller than the
best-so-far rank $(3,915,522,5000,220)$, despite improvements in later components.
At step $9$ one has $(3,999,511,4770,210)$: here $d_3$ and $d_4$ improve, but $d_2$ increases and blocks
lexicographic improvement.
Thus no improvement occurs in a window of length $5$, giving a bounded-delay violation.
\end{proof}

Lemma \ref{thm:counterexample} indicates that the discretized lex order is too sensitive to
the second component on unrestricted input families.
A natural modification is to either (i) restrict the input class to the structured families used in
our benchmark (bounded-degree mixed perturbations and Frobenius-flat base competition), or (ii) coarsen
the discretization of $c_2$ so that spurious oscillations of $d_2$ do not mask genuine progress in
$(d_3,d_4,d_5)$.

\subsection{Third experiment: a counterexample-driven improvement}
\label{subsec:disc-100}

In the final series of experiments we added the counterexample of Lemma \ref{thm:counterexample} to the benchmark, and we further expanded the
test set so as to stress (i) long-delay heavy-tail mixed terms, (ii) sensitivity to ordering and tie-breaking
in the center-selection heuristic, (iii) delayed shade spikes (a toy analogue of kangaroo-type behaviour), and
(iv) a small holdout regime of order $p^2$ examples.
The resulting dim$=4$, $p=3$ test set contains $100$ test ideals (Appendix~A.4).

We also replaced the linear primary objective by a \emph{saturated} score on the full stage:
\[
\mathrm{Score} = 2N_{\mathrm{solved}} - \sum_{i=1}^{|\mathcal B_{100}|} \tanh \Bigl(\frac{\mathrm{violations}_i}{10}\Bigr).
\]
This ensures that solving an additional case (awareded by $+2$) always dominates any amount of penalty reduction on a
single failing case (worth at most $1$), and empirically stabilizes the search when new adversarial cases are added.

We initialized AlphaEvolve with the discretized lex rank from Appendix~B.2 (the solution in the previous experiment). This means that our evolve block started evolving from the learned discretized lex tuple from the previous experiment. After approximately $4$ hours, AlphaEvolve discovered a new five-component raw rank function
\[\mathcal{R}_{100}=\Pi \circ (c_1,c_2,c_3,c_4,c_5)\] 
(see Appendix~B.3) which, under the \emph{same} discretization map $\Pi$ from
Section \ref{subsec:discretize} and the same bounded-delay window $m=5$, achieves perfect performance 
\begin{center}
\begin{tabular}{@{}lr@{}}
\toprule
Metric & Value \\
\midrule
Total violations  & $0$ \\
Solved cases (full stage) & $100/100$ \\
Total local increases & $285$ \\
Max plateau length & $2$ \\
\bottomrule
\end{tabular}
\end{center}
In particular, the counterexample of Lemma \ref{thm:counterexample} does no longer violates this ranking function. Compared to the $\mathcal R_{\mathrm{disc}}$
discretized solution, the new ranker makes two qualitative changes.
The components of the new naking function (before discretization) are:
\[
\begin{aligned}
c_1 &=
\begin{cases}
0, & \text{if } f_9=1,\\
f_0, & \text{if } f_9\neq 1,
\end{cases}
\\[6pt]
c_2 &= 1.0\,f_{14}+0.1\,f_{21}+0.1\,f_{1}+0.8\,f_{23}+0.5\,f_{7}+0.2\,f_{17},
\\[6pt]
c_3 &= f_{10}+2.0\,f_{19}+0.5\,f_{20}+0.1\,f_{4}+0.2\,f_{12}+0.1\,f_{13}.
\end{aligned}
\]

For the fourth component, define the intermediate quantities (as they stand in the evolved block):
\[
\begin{aligned}
A &:= 10.0\,f_{24}^{2}+5.0\,f_{25}
\qquad\text{(baseline\_acc\_term)},\\[4pt]
J &:= f_{6}+(1.0-f_{23})+f_{12}+f_{13}
\qquad\text{(jac\_problem\_signal)},\\[4pt]
\alpha &:= \max\!\left\{0,\ \tanh\!\left(J+\frac{f_{21}}{1.0+f_{22}}\right)\right\}
\qquad\text{(jac\_problem\_activation)},\\[6pt]
W &:= f_{10}+f_{18}+f_{5}+f_{19}f_{20}+f_{4}
\qquad\text{(insep\_wild\_signal)},\\[4pt]
\beta &:= \max\!\left\{0,\ \tanh\!\left(\frac{W}{5.0}\right)\right\}
\qquad\text{(insep\_wild\_activation)},\\[6pt]
\gamma &:= \max\!\left\{0,\ \tanh\!\left(\frac{f_{24}+f_{25}+f_{4}}{10.0}\right)\right\}
\qquad\text{(effort\_pressure)},\\[6pt]
\kappa &:= 1000.0\left(1.0+\tanh\!\left(\frac{A+f_{4}+f_{5}+f_{18}}{100.0}\right)\right)
\qquad\text{(catastrophe\_magnitude\_coeff)},\\[6pt]
\sigma &:= 0.01+0.5\,\alpha+0.5\,\beta+0.1\,\gamma
\qquad\text{(combined\_pathology\_activation\_strength)},\\[6pt]
P &:= \kappa\,\exp(\sigma)
\qquad\text{(contextual\_pathology\_interaction\_term)}.
\end{aligned}
\]
Then
\[
c_4 = -\bigl(A+P\bigr).
\]
Finally,
\[
c_5 = f_{18}+f_{5}+0.5\,f_{8}+2.0\,f_{6}+0.1\,f_{15}-0.1\,f_{22}.
\]
Note that:
\begin{itemize}[leftmargin=*]
\item The component $c_2$ is again dominated by the boundary-residual weighted-order proxy $f_{14}$,
but it re-introduces a substantial Jacobian-density term (the nonzero-partials proxy $f_{23}$) and a Newton-slope term
$f_7$. Empirically this reduces oscillation in the discretized component $\Pi_1$ on heavy-tail instances.

\item The accumulator component $c_4$ contains a \emph{catastrophe term}:
a large negative contribution built from an exponential of a blended activation of Jacobian pathologies,
inseparability/wildness, and accumulated boundary/Frobenius-depth.
Under the discretization $\Pi_3 = 5000 - \lfloor 100\log(1-\!c_4)\rfloor$, this produces reliable improvements in
$\Pi_3$ exactly in regimes where the earlier ranker could stall for more than $m=5$ steps.
\end{itemize}

Lemma \ref{thm:counterexample} shows that the heavy-tail mixed terms can force delay gaps
that are not detected by a purely "low-order" discretized invariant.
The $100$-case run suggests that incorporating additional Jacobian-density information and an explicit
(non-linear) interaction with Frobenius-depth and boundary accumulation can repair this behaviour.

\begin{conjecture}[Discretized ranking function]
\label{conj:disc-robust}
Let $\Pi$ be the discretization map of Section~\ref{subsec:discretize}. 
Then there exists a five-component raw rank function $R=(c_1,c_2,c_3,c_4,c_5)$ built from the intrinsic invariants $f_0,\ldots, f_{25}$ such that $\Pi \circ R$ is a bounded-delay ranking function for $\mathrm{dim}=4$, $p=3$ monic purely inseparable hypersurface singularities. 
Moreover, the explicit ranker $\mathcal{R}_{100}$ from Section \ref{subsec:disc-100} is conjectured to generalize to families obtained by adding further mixed terms of arbitrarily high degree.
\end{conjecture}

We emphasize that Conjecture~\ref{conj:disc-robust} is still only supported by finite computational evidence,
but it points toward a plausible form of a delayed lexicographic invariant in positive characteristic.

\appendix

\section{Test singularities}

\subsection{The broad benchmark}
In our initial experiments to find ranking functions, we used a broad benchmark consisting of singularities across various dimensions and characteristics It contained two types of inputs:
\begin{enumerate}
  \item Hand-designed prototypes (24 singularities), intended to isolate specific characteristic-$p$ failure modes: purely inseparable plateaux, immediate order drops, oblique/mixed perturbations, etc.
  \item Randomized surrogates (60 cases), generated by sampling exponent patterns in dimensions
  $4,5,6$ to increase diversity.
\end{enumerate}

The goal of this benchmark is adversarial coverage rather than trying to faithfully simulate typical or naturally occurring singularities. Although the simulator is only a toy model, it forces the ranking function to handle several difficult situations: (i) Jacobians that become flat in characteristic $p$,
(ii) multiple base monomials with similar orders that compete with each other, and
(iii) the unexpected appearance of low-degree mixed terms after repeated chart reductions.
Table~\ref{tab:legacy-structured} lists the 24 hand-designed cases.

\newpage

\begin{longtable}{@{}p{0.22\textwidth} r r p{0.28\textwidth} p{0.18\textwidth}@{}}
\caption{Multi-dimensional benchmark of prototype cases. Names and notes generated by Gemini}\label{tab:legacy-structured}\\
\toprule
Name & $p$ & dim & Monomials & Notes \\
\midrule
\endfirsthead
\toprule
Name & $p$ & dim & Monomials  & Notes \\
\midrule
\endhead
plateau\_line\_A3 & 5 & 3 & $z^{5} + x^{10}$ & Equimultiple line \{x=z=0\}; first blow-up plateaus (order stays p). \\
drop\_line\_A3 & 5 & 3 & $z^{5} + x^{7}$ & First blow-up in x-chart drops order from p to (p+2)-p=2. \\
wild\_surrogate\_A3 & 5 & 3 & $z^{5} + x^{5}y + x^{10}$ & Toy wild case: keeps order p after first step; adds a shade-like penalty. \\
drop\_plane\_A4 & 5 & 4 & $z^{5} + x^{5}w^{4}$ & Singular plane \{x=z=0\}; first blow-up in x-chart drops order to 1. \\
plateau\_cross\_A3 & 5 & 3 & $z^{5} + x^{10} + y^{10} + x^{5}y^{5}$ & Max locus is the y-axis $\cup$ x-axis; canonical center selects the component meeting elim conditions. \\
oblique\_surrogate\_A3 & 5 & 3 & $z^{5} + x^{5}y^{4} + y^{10}$ & Surrogate for oblique/kangaroo stress: may force non-decrease in early steps. \\
non\_monic\_z\_A3 & 5 & 3 & $z^{4}x + y^{10}$ & Non-monic in z; elimination proxy stresses f2 \\
binomial\_pair\_A4 & 5 & 4 & $z^{5} + x^{10} + y^{5} + w^{5}$ & Toy binomial ideal as single combined polynomial proxy \\
AS\_flavor\_A3 & 5 & 3 & $z^{5} + z + x^{5}y^{4}$ & Artin-Schreier surrogate; mixed separable/inseparable \\
jac\_vanish\_A3 & 5 & 3 & $z^{10} + x^{15} + y^{10}$ & All exponents multiples of p; Jacobian-vanish flag=1 \\
monomial\_control\_A3 & 5 & 3 & $x^{7}y^{5}$ & Monomial-phase control (no z) \\
param\_drop\_a6\_A3 & 5 & 3 & $z^{5} + x^{6}$ & Parameterized immediate-drop family \\
param\_drop\_a7\_A3 & 5 & 3 & $z^{5} + x^{7}$ & Parameterized immediate-drop family \\
param\_drop\_a9\_A3 & 5 & 3 & $z^{5} + x^{9}$ & Parameterized immediate-drop family \\
A4\_drop\_mixed & 5 & 4 & $z^{5} + x^{5}w^{4}$ & Classic 4D mixed; expected order drop \\
A4\_plateau\_plus\_mix & 5 & 4 & $z^{5} + x^{10} + x^{5}y + w^{5}$ & Plateau tendency with extra base monomial \\
A4\_nonmonic\_z & 5 & 4 & $z^{4}x^{2} + y^{10} + w^{5}$ & Non-monic in z; elimination stress \\
A4\_AS\_flavor & 5 & 4 & $z^{5} + z + x^{5}y^{4} + w^{9}$ & Artin-Schreier-like with extra high base power \\
A4\_binomial\_toroidal & 5 & 4 & $z^{5} + x^{5}y^{5}$ & Toroidal/binomial-type; monomial after elimination \\
A5\_drop\_two\_params & 5 & 5 & $z^{5} + x^{5}u^{4} + y^{10}$ & 5D mixed with two base parameters \\
A5\_wild\_oblique & 5 & 5 & $z^{5} + x^{5}y^{4}u + v^{10}$ & Oblique with an extra factor u \\
A5\_cross\_competition & 5 & 5 & $z^{5} + x^{10} + y^{10} + x^{5}y^{5} + u^{5}v^{4}$ & Competing centers and mixed base term \\
A6\_multi\_mixed & 5 & 6 & $z^{5} + x^{5}u^{4} + y^{5}v^{4} + w^{9}$ & Two mixed pairs plus high-power base term \\
A6\_nonmonic\_wild & 5 & 6 & $z^{4}x^{2} + x^{5}y^{4}v + u^{10} + w^{5}$ & Non-monic + oblique + extra base powers \\
\bottomrule
\end{longtable}

\subsection{The focused $\dim=4$, $p=3$ benchmark (71 cases)}
The second phase of experimentation restricts to $\mathbb{A}^4$ in characteristic $3$ and to
\emph{monic purely inseparable} presentations with leading term $z^3$ plus competing base terms.
This restriction is motivated by two considerations:
(i) it isolates the characteristic-$p$ plateau regime that is most challenging for
Lyapunov-style ranking functions, and (ii) it allows us to densely populate the benchmark
with targeted adversarial variants (tie permutations, delayed shade spikes, $z^2$-perturbations)
while keeping the overall evaluation cost manageable. The 71 test singularities are the disjoint union of:
\begin{itemize}
  \item a focused base-competition set of 20 cases (Appendix~\ref{tab:focused20}),
  built from plateau lines, cross competitions, and mixed degree-$6$ perturbations; and
  \item an adversarial tricky set of 51 cases (Appendix~\ref{tab:tricky51}),
  specifically constructed to challenge tie-breaking, shade spikes, initial-form inseparability, and order-$p^2$ regimes.
\end{itemize}

\begin{longtable}{@{}p{0.30\textwidth} p{0.32\textwidth} p{0.30\textwidth}@{}}
\caption{Focused $\dim=4$, $p=3$ test set: base-competition cases (20). Names and notes generated by Gemini}\label{tab:focused20}\\
\toprule
Name & Monomials & Notes \\
\midrule
\endfirsthead
\toprule
Name & Monomials & Notes \\
\midrule
\endhead
p3\_A4\_plateau\_line\_x6 & $z^{3} + x^{6} + w^{6}$ & Monic z\textasciicircum{}3 with a single large base power (plateau-prone) plus w\textasciicircum{}6. \\
p3\_A4\_plateau\_line\_x9 & $z^{3} + x^{9} + w^{6}$ & Monic z\textasciicircum{}3 with a single large base power (plateau-prone) plus w\textasciicircum{}6. \\
p3\_A4\_plateau\_line\_x12 & $z^{3} + x^{12} + w^{6}$ & Monic z\textasciicircum{}3 with a single large base power (plateau-prone) plus w\textasciicircum{}6. \\
p3\_A4\_cross\_x6\_y6 & $z^{3} + x^{6} + y^{6} + w^{6}$ & Cross/competition among base pure powers at multiples of p. \\
p3\_A4\_cross\_x6\_y9 & $z^{3} + x^{6} + y^{9} + w^{6}$ & Cross/competition among base pure powers at multiples of p. \\
p3\_A4\_cross\_x6\_y12 & $z^{3} + x^{6} + y^{12} + w^{6}$ & Cross/competition among base pure powers at multiples of p. \\
p3\_A4\_cross\_x9\_y6 & $z^{3} + x^{9} + y^{6} + w^{6}$ & Cross/competition among base pure powers at multiples of p. \\
p3\_A4\_cross\_x9\_y9 & $z^{3} + x^{9} + y^{9} + w^{6}$ & Cross/competition among base pure powers at multiples of p. \\
p3\_A4\_cross\_x9\_y12 & $z^{3} + x^{9} + y^{12} + w^{6}$ & Cross/competition among base pure powers at multiples of p. \\
p3\_A4\_cross\_x12\_y6 & $z^{3} + x^{12} + y^{6} + w^{6}$ & Cross/competition among base pure powers at multiples of p. \\
p3\_A4\_cross\_x12\_y9 & $z^{3} + x^{12} + y^{9} + w^{6}$ & Cross/competition among base pure powers at multiples of p. \\
p3\_A4\_cross\_x12\_y12 & $z^{3} + x^{12} + y^{12} + w^{6}$ & Cross/competition among base pure powers at multiples of p. \\
p3\_A4\_cross\_square\_6 & $z^{3} + x^{6} + y^{6} + x^{3}y^{3} + w^{6}$ & Plateau/cross family with mixed p-toroidal term x\textasciicircum{}3 y\textasciicircum{}3. \\
p3\_A4\_cross\_square\_6\_w9 & $z^{3} + x^{6} + y^{6} + x^{3}y^{3} + w^{9}$ & Same as cross\_square\_6 but with w\textasciicircum{}9 competitor. \\
p3\_A4\_plateau\_mix\_6a & $z^{3} + x^{6} + x^{4}y^{2} + w^{6}$ & Plateau base with mixed degree-6 perturbation x\textasciicircum{}4 y\textasciicircum{}2 (Jacobian not identically 0). \\
p3\_A4\_plateau\_mix\_6b & $z^{3} + y^{6} + x^{2}y^{4} + w^{6}$ & Plateau base with mixed degree-6 perturbation x\textasciicircum{}2 y\textasciicircum{}4. \\
p3\_A4\_cross\_mix\_6c & $z^{3} + x^{6} + y^{6} + x^{5}y + w^{6}$ & Cross with mixed perturbation x\textasciicircum{}5 y (wildness/jacobian density changes). \\
p3\_A4\_cross\_mix\_6d & $z^{3} + x^{9} + y^{6} + x^{7}y^{2} + w^{6}$ & Cross at (9,6) with mixed perturbation x\textasciicircum{}7 y\textasciicircum{}2. \\
p3\_A4\_cross\_mix\_6e & $z^{3} + x^{12} + y^{9} + x^{8}yw + w^{6}$ & Heavier cross (12,9) with small mixed x\textasciicircum{}8 y w. \\
p3\_A4\_pure\_cross\_666 & $z^{3} + x^{6} + y^{6} + w^{6}$ & Pure triple competition among base pure powers (classic plateau/cross). \\
\bottomrule
\end{longtable}

\begin{longtable}{@{}p{0.30\textwidth} p{0.32\textwidth} p{0.30\textwidth}@{}}
\caption{Focused $\dim=4$, $p=3$ test set: adversarial tricky cases (51).Notes and names generated by Gemini}\label{tab:tricky51}\\
\toprule
Name & Monomials  & Notes \\
\midrule
\endfirsthead
\toprule
Name & Monomials  & Notes \\
\midrule
\endhead
p3\_A4\_monomial\_control\_1 & $x^{7}y^{5}w^{4}$ & Monomial phase sanity check (no z). \\
p3\_A4\_monomial\_control\_2 & $x^{9}y^{6}$ & Monomial phase sanity check (no z, Frobenius-flat exponents). \\
p3\_A4\_cross\_Frob\_6\_6\_6 & $z^{3} + x^{6} + y^{6} + w^{6}$ & Purely inseparable (monic z\textasciicircum{}3) with Frobenius-flat competing base powers. \\
p3\_A4\_cross\_Frob\_9\_6\_6 & $z^{3} + x^{9} + y^{6} + w^{6}$ & Purely inseparable (monic z\textasciicircum{}3) with Frobenius-flat competing base powers. \\
p3\_A4\_cross\_Frob\_9\_9\_6 & $z^{3} + x^{9} + y^{9} + w^{6}$ & Purely inseparable (monic z\textasciicircum{}3) with Frobenius-flat competing base powers. \\
p3\_A4\_cross\_Frob\_9\_9\_9 & $z^{3} + x^{9} + y^{9} + w^{9}$ & Purely inseparable (monic z\textasciicircum{}3) with Frobenius-flat competing base powers. \\
p3\_A4\_cross\_Frob\_12\_9\_6 & $z^{3} + x^{12} + y^{9} + w^{6}$ & Purely inseparable (monic z\textasciicircum{}3) with Frobenius-flat competing base powers. \\
p3\_A4\_cross\_Frob\_12\_12\_6 & $z^{3} + x^{12} + y^{12} + w^{6}$ & Purely inseparable (monic z\textasciicircum{}3) with Frobenius-flat competing base powers. \\
p3\_A4\_cross\_Frob\_12\_12\_12 & $z^{3} + x^{12} + y^{12} + w^{12}$ & Purely inseparable (monic z\textasciicircum{}3) with Frobenius-flat competing base powers. \\
p3\_A4\_cross\_Frob\_15\_12\_9 & $z^{3} + x^{15} + y^{12} + w^{9}$ & Purely inseparable (monic z\textasciicircum{}3) with Frobenius-flat competing base powers. \\
p3\_A4\_tieperm\_6\_1 & $z^{3} + x^{6} + y^{6} + w^{6}$ & Same polynomial, different monomial ordering to stress tie-breaking invariance. \\
p3\_A4\_tieperm\_6\_2 & $z^{3} + x^{6} + w^{6} + y^{6}$ & Same polynomial, different monomial ordering to stress tie-breaking invariance. \\
p3\_A4\_tieperm\_6\_3 & $z^{3} + y^{6} + x^{6} + w^{6}$ & Same polynomial, different monomial ordering to stress tie-breaking invariance. \\
p3\_A4\_tieperm\_6\_4 & $z^{3} + y^{6} + w^{6} + x^{6}$ & Same polynomial, different monomial ordering to stress tie-breaking invariance. \\
p3\_A4\_tieperm\_6\_5 & $z^{3} + w^{6} + x^{6} + y^{6}$ & Same polynomial, different monomial ordering to stress tie-breaking invariance. \\
p3\_A4\_tieperm\_6\_6 & $z^{3} + w^{6} + y^{6} + x^{6}$ & Same polynomial, different monomial ordering to stress tie-breaking invariance. \\
p3\_A4\_tieperm\_9\_1 & $z^{3} + x^{9} + y^{9} + w^{9}$ & Same polynomial, different monomial ordering to stress tie-breaking invariance. \\
p3\_A4\_tieperm\_9\_2 & $z^{3} + x^{9} + w^{9} + y^{9}$ & Same polynomial, different monomial ordering to stress tie-breaking invariance. \\
p3\_A4\_tieperm\_9\_3 & $z^{3} + y^{9} + x^{9} + w^{9}$ & Same polynomial, different monomial ordering to stress tie-breaking invariance. \\
p3\_A4\_tieperm\_9\_4 & $z^{3} + y^{9} + w^{9} + x^{9}$ & Same polynomial, different monomial ordering to stress tie-breaking invariance. \\
p3\_A4\_tieperm\_9\_5 & $z^{3} + w^{9} + x^{9} + y^{9}$ & Same polynomial, different monomial ordering to stress tie-breaking invariance. \\
p3\_A4\_tieperm\_9\_6 & $z^{3} + w^{9} + y^{9} + x^{9}$ & Same polynomial, different monomial ordering to stress tie-breaking invariance. \\
p3\_A4\_tieperm\_12\_1 & $z^{3} + x^{12} + y^{12} + w^{12}$ & Same polynomial, different monomial ordering to stress tie-breaking invariance. \\
p3\_A4\_tieperm\_12\_2 & $z^{3} + x^{12} + w^{12} + y^{12}$ & Same polynomial, different monomial ordering to stress tie-breaking invariance. \\
p3\_A4\_tieperm\_12\_3 & $z^{3} + y^{12} + x^{12} + w^{12}$ & Same polynomial, different monomial ordering to stress tie-breaking invariance. \\
p3\_A4\_tieperm\_12\_4 & $z^{3} + y^{12} + w^{12} + x^{12}$ & Same polynomial, different monomial ordering to stress tie-breaking invariance. \\
p3\_A4\_tieperm\_12\_5 & $z^{3} + w^{12} + x^{12} + y^{12}$ & Same polynomial, different monomial ordering to stress tie-breaking invariance. \\
p3\_A4\_tieperm\_12\_6 & $z^{3} + w^{12} + y^{12} + x^{12}$ & Same polynomial, different monomial ordering to stress tie-breaking invariance. \\
p3\_A4\_immediate\_shade\_1 & $z^{3} + x^{9} + y^{6} + w^{6} + x^{2}y$ & Immediate shade/wildness stress: mixed monomial in degree window [3,5]. \\
p3\_A4\_immediate\_shade\_2 & $z^{3} + x^{9} + y^{6} + w^{6} + xy^{2}$ & Immediate shade/wildness stress: mixed monomial in degree window [3,5]. \\
p3\_A4\_immediate\_shade\_3 & $z^{3} + x^{9} + y^{6} + w^{6} + xyw$ & Immediate shade/wildness stress: mixed monomial in degree window [3,5]. \\
p3\_A4\_immediate\_shade\_4 & $z^{3} + x^{9} + y^{6} + w^{6} + x^{2}yw$ & Immediate shade/wildness stress: mixed monomial in degree window [3,5]. \\
p3\_A4\_immediate\_shade\_5 & $z^{3} + x^{9} + y^{6} + w^{6} + x^{2}y^{2}w$ & Immediate shade/wildness stress: mixed monomial in degree window [3,5]. \\
p3\_A4\_immediate\_shade\_6 & $z^{3} + x^{9} + y^{6} + w^{6} + x^{3}y^{2}$ & Immediate shade/wildness stress: mixed monomial in degree window [3,5]. \\
p3\_A4\_kangaroo\_delay\_step\_1\_deg4 & $z^{3} + x^{12} + y^{6} + w^{6} + x^{5}yw$ & Delayed-shade spike surrogate: mixed term enters shade window after several x-chart reductions. \\
p3\_A4\_kangaroo\_delay\_step\_2\_deg4 & $z^{3} + x^{15} + y^{6} + w^{6} + x^{8}yw$ & Delayed-shade spike surrogate: mixed term enters shade window after several x-chart reductions. \\
p3\_A4\_kangaroo\_delay\_step\_3\_deg4 & $z^{3} + x^{18} + y^{6} + w^{6} + x^{11}yw$ & Delayed-shade spike surrogate: mixed term enters shade window after several x-chart reductions. \\
p3\_A4\_kangaroo\_delay\_step\_4\_deg4 & $z^{3} + x^{21} + y^{6} + w^{6} + x^{14}yw$ & Delayed-shade spike surrogate: mixed term enters shade window after several x-chart reductions. \\
p3\_A4\_kangaroo\_delay\_step\_5\_deg4 & $z^{3} + x^{24} + y^{6} + w^{6} + x^{17}yw$ & Delayed-shade spike surrogate: mixed term enters shade window after several x-chart reductions. \\
p3\_A4\_kangaroo\_double\_delay\_1\_3 & $z^{3} + x^{18} + y^{6} + w^{6} + x^{5}yw + x^{11}y^{2}w$ & Two delayed-shade mixed terms,repeated spike opportunities (kangaroo-like). \\
p3\_A4\_kangaroo\_double\_delay\_2\_4 & $z^{3} + x^{21} + y^{6} + w^{6} + x^{8}yw + x^{14}y^{2}w$ & Two delayed-shade mixed terms, repeated spike opportunities (kangaroo-like). \\
p3\_A4\_kangaroo\_double\_delay\_3\_5 & $z^{3} + x^{24} + y^{6} + w^{6} + x^{11}yw + x^{17}y^{2}w$ & Two delayed-shade mixed terms, repeated spike opportunities (kangaroo-like). \\
p3\_A4\_z2\_initial\_perturb\_1 & $z^{3} + z^{2}x + x^{9} + y^{6} + w^{6}$ & Initial-form perturbation: z\textasciicircum{}2*(base) term breaks pure inseparability of initial degree-3 part. \\
p3\_A4\_z2\_initial\_perturb\_2 & $z^{3} + z^{2}y + x^{9} + y^{6} + w^{6}$ & Initial-form perturbation: z\textasciicircum{}2*(base) term breaks pure inseparability of initial degree-3 part. \\
p3\_A4\_z2\_initial\_perturb\_3 & $z^{3} + z^{2}xy + x^{9} + y^{6} + w^{6}$ & Initial-form perturbation: z\textasciicircum{}2*(base) term breaks pure inseparability of initial degree-3 part. \\
p3\_A4\_z2\_initial\_perturb\_4 & $z^{3} + z^{2}xw + x^{9} + y^{6} + w^{6}$ & Initial-form perturbation: z\textasciicircum{}2*(base) term breaks pure inseparability of initial degree-3 part. \\
p3\_A4\_z2\_initial\_perturb\_5 & $z^{3} + z^{2}x^{2}y + x^{9} + y^{6} + w^{6}$ & Initial-form perturbation: z\textasciicircum{}2*(base) term breaks pure inseparability of initial degree-3 part. \\
p3\_A4\_toroidal\_plus\_oblique\_1 & $z^{3} + x^{3}y^{3}w^{3} + x^{2}y + w^{9}$ & Toroidal base term + small oblique/mixed perturbation (breaks clean monomial behavior). \\
p3\_A4\_toroidal\_plus\_oblique\_2 & $z^{3} + x^{6}y^{3} + x^{3}y^{2} + w^{12}$ & Quasi-monomial base + oblique perturbation (designed to create non-monotone secondary features). \\
p3\_A4\_order9\_Frob\_flat & $z^{9} + x^{18} + y^{18} + w^{18}$ & Order $p^2$ case: $z^9$ plus Frobenius-flat base powers (holdout-style harder regime). \\
p3\_A4\_order9\_with\_shade & $z^{9} + x^{27} + y^{18} + w^{18} + x^{14}y^{2}w$ & Order p\textasciicircum{}2 with a mixed term that may drift toward the shade window under repeated x-chart reductions. \\
\bottomrule
\end{longtable}

\paragraph{Complete JSON manifest (71 cases).}
The full machine-readable manifest of the focused benchmark is included below.

\subsection{Extending the dim $4$, $p=3$ benchmark to $100$ cases}
\label{app:benchmark100}

For the final counterexample-driven experiment of Section~\ref{subsec:disc-100} we enlarged the
tricky dim $4$, $p=3$ test set decribed in Appendix~A.3 by $29$ additional instances, keeping the focused
$20$-case family unchanged. The additional cases were chosen to target failure types given by Theorem~\ref{thm:counterexample} and by
manual inspection of near-violations in earlier runs.  They fall into a small number of parametric families:

\begin{enumerate}[leftmargin=*]
\item Heavy-tail mixed terms (6 cases).
These include the explicit counterexample of Theorem~\ref{thm:counterexample} together with variations obtained by permuting exponents among variables and by lowering the mixed-term degree
to the boundary regime (a single degree-$15$ mixed monomial), plus two shade-boundary examples
with mixed terms of degrees $8$ and $9$, and a deep variable case where $w$ appears only in very high degree.
(See the \texttt{p3\_A4\_counter\_example\_*}, \texttt{p3\_A4\_shade\_boundary\_*}, and \texttt{p3\_A4\_deep\_variable\_w}
instances in the evaluation harness.)
\item Fermat-like singularities (5 cases).
For $i=1,\dots,5$ we include
\[
z^3 + x^6 + y^6 + x^3y^3w^3 + w^{3i+1},
\]
which combines a purely inseparable Fermat core with a small oblique perturbation.
\item Weighted-homogeneous plateaux (2 cases).
We include a weighted plateau
\[
z^3 + x^6 + y^9 + w^{18},
\]
together with a perturbation by an additional mixed term $x\,w^{15}$ of the same weighted degree.

\item We include five cases of the form
\[
z^3 + x^{a} + y^{b} + w^{12},
\]
with exponent pairs $(a,b)\in\{(7,5),(11,4),(13,2),(8,5),(10,3)\}$.

\item Extreme imbalance (1 case). A single singularity
\[
z^3 + x^4 + y^{100} + w^6
\]
to test behavior under widely separated base degrees.

\item Dense mixed perturbations (10 cases). 
For $i=1,\dots,10$ we include a dense five-term family
\[
z^3 + x^9 + y^9 + x^2y^2w^2 + x^{i}w^5 + y^4w^{i},
\]
intended to test stability of the ranker under many mixed terms and weak tie-breaking signals.
\end{enumerate}

Taken together, these additions were designed to force long delays before any low-degree proxy changes,
create occasional spikes in the shade proxy after several canonical steps, and prevent the search from
overfitting to a single monomial ordering or to a narrow plateau regime.

\section{Evolved ranking functions in dim $4$, $p=3$}
\label{app:evolved}

For reproducibility we record the three evolved \texttt{ranking\_function} blocks discussed in Section \ref{sec:results}. They are pure functions of the $26$-tuple of features described in Section~\ref{sec:features}.
The first returns a two-component rank using a large scalarization designed to encode a lexicographic hierarchy.
The second returns an explicit five-component lex tuple which is then discretized via the map $\Pi$ of Section~\ref{subsec:discretize}. The third uses the same discretixzation map and discovers more robust lex components.  

\subsection{$\mathcal{R}(x) = (c_1(x), c_{\mathrm{combined}}(x))$: ranking function achieving perfect score on the $71$-case test set}

\begin{verbatim}
# EVOLVE-BLOCK-START
import math
from typing import Tuple, Union

Rank = Tuple[Union[int, float], ...]
Features = Tuple[Union[int, float], ...]

def ranking_function(features: Features) -> Rank:
    # Core features (old indices preserved)
    f_max_order = float(features[0])               # f0
    f_elim_order = float(features[1])              # f1
    f_boundary_count = float(features[4])          # f4
    f_shade = float(features[5])                   # f5
    f_newton = float(features[7])                  # f7
    f_E_order = float(features[8])                 # f8
    f_monomial_phase = int(features[9])            # f9
    f_insep_init = float(features[10])             # f10
    f_word = float(features[14])                   # f14
    f_wild = float(features[18])                   # f18

    # New appended features
    f_base_dim_locus = float(features[19])         # f19
    f_base_comp_locus = float(features[20])        # f20
    f_HS_base = float(features[21])                # f21
    f_jac_min_ord = float(features[22])            # f22
    f_jac_nonzero = float(features[23])            # f23
    f_padic_depth = float(features[24])            # f24
    f_boundary_mult_sum = float(features[25])      # f25

    # Component 1: Monomial Boundary Rule + Primary f0 dependence.
    c1 = 0.0 if f_monomial_phase == 1 else (f_max_order + 0.25)

    # Component 2: Primary Weighted Order Descent.
    c2_orig_val = f_word

    # Component 3: Hilbert-Samuel, Inseparable Initial Forms, and Jacobian Signal with Locus Penalty.
    jacobian_reward_term = (1.0 - f_jac_nonzero) * (1.0 + f_jac_min_ord / 5.0)
    locus_penalty_term = 0.1 * f_base_dim_locus + 0.05 * f_base_comp_locus

    # Phase-Space Potential Mapping (HS, p-adic) -> potential
    x_hs = f_HS_base / 25.0
    y_padic = f_padic_depth / 10.0
    phase_potential = -5.0 * math.atan2(y_padic, x_hs)

    aggregate_plateau_input = (
        f_HS_base
        + locus_penalty_term
        + f_insep_init
        - jacobian_reward_term
        + phase_potential
    )
    c3_orig_val = math.tanh(aggregate_plateau_input / 5.0) * 50.0

    # Component 4: Primary Descent Accelerators.
    c4_orig_val = (
        f_elim_order * 0.15
        - f_newton * 1.5
        - 1.0 * math.exp(f_boundary_mult_sum * 0.1)
        + f_E_order * 0.2
    )

    # Component 5: Wildness and Residual Boundary Information.
    c5_orig_val = (
        f_shade * 0.5
        + f_wild * 0.5
        - f_boundary_count * 0.1
    )

    # Scalarization weights (chosen to encode lex hierarchy)
    MAX_ABS_C5 = 55.0
    MAX_ABS_C4 = 22326.0
    MAX_ABS_C3 = 50.0
    SAFETY_MARGIN = 1.0

    W_c5 = 1.0
    W_c4 = 250.0
    W_c3 = math.ceil(MAX_ABS_C4 + SAFETY_MARGIN) * W_c4
    W_c2 = math.ceil(MAX_ABS_C3 + SAFETY_MARGIN) * W_c3

    c_combined = (
        W_c2 * c2_orig_val
        + W_c3 * c3_orig_val
        + W_c4 * c4_orig_val
        + W_c5 * c5_orig_val
    )
    return (c1, c_combined)

# EVOLVE-BLOCK-END
\end{verbatim}

\subsection{$\mathcal{R}_{disc}$: discretized lex ranking function achieving perfect score on the 71-case suite}

\begin{verbatim}
# EVOLVE-BLOCK-START
from typing import Tuple, Union
import math

Rank = Tuple[Union[int, float], ...]
Features = Tuple[Union[int, float], ...]

def ranking_function(features: Features) -> Rank:
    # Core features (old indices preserved)
    f_max_order = float(features[0])               # f0
    f_elim_order = float(features[1])              # f1
    f_boundary_count = float(features[4])          # f4
    f_shade = float(features[5])                   # f5
    f_newton = float(features[7])                  # f7
    f_E_order = float(features[8])                 # f8
    f_monomial_phase = int(features[9])            # f9
    f_insep_init = float(features[10])             # f10
    f_word = float(features[14])                   # f14
    f_wild = float(features[18])                   # f18

    # New appended features
    f_base_dim_locus = float(features[19])         # f19
    f_base_comp_locus = float(features[20])        # f20
    f_HS_base = float(features[21])                # f21
    f_jac_min_ord = float(features[22])            # f22
    f_jac_nonzero = float(features[23])            # f23
    f_padic_depth = float(features[24])            # f24
    f_boundary_mult_sum = float(features[25])      # f25

    # Component 1: must be 0 in monomial phase, >0 otherwise.
    c1 = 0.0 if f_monomial_phase == 1 else float(f_max_order)

    # Component 2: primary plateau breaker (linearized).
    c2 = (
        0.5 * f_word
        + 0.5 * f_HS_base
        + 0.05 * f_elim_order
        + 0.01 * f_shade
    )

    # Component 3: secondary locus/inseparability signals.
    c3 = (
        f_insep_init
        + f_base_dim_locus
        + 0.1 * f_base_comp_locus
    )

    # Component 4: "cubic depth charge" (log-scale target).
    pathology_interaction_amplifier = f_insep_init * f_padic_depth * (1.0 - f_jac_nonzero)
    c4 = -1.0 * (
        4.0 * (f_padic_depth ** 3)
        + 1.0 * f_boundary_mult_sum
        + 5.0 * (1.0 - f_jac_nonzero) * f_padic_depth
        + 10.0 * pathology_interaction_amplifier
    )

    # Component 5: final tie-breaker.
    c5 = f_wild + 0.5 * f_E_order

    return (c1, c2, c3, c4, c5)

# EVOLVE-BLOCK-END
\end{verbatim}

\subsection{$\mathcal{R}_{100}$: final robust ranking function}

\begin{verbatim}

# EVOLVE-BLOCK-START
import math
from typing import Tuple, Union

Rank = Tuple[Union[int, float], ...]
Features = Tuple[Union[int, float], ...]

def ranking_function(features: Features) -> Rank:
    """
    Candidate rank function.

    IMPORTANT: feature indices 0..18 are the original features,
    and new features were appended after that (19+).
    This keeps backward compatibility with older rankers.
    MUST BE PURE: no global state, no history, no randomness.
    """

    # Extract common features for readability and performance
    f0  = float(features[0])   # Max order
    f1  = float(features[1])   # Elimination order
    f4  = float(features[4])   # Boundary count
    f5  = float(features[5])   # Shade penalty
    f6  = float(features[6])   # Jacobian vanish flag
    f7  = float(features[7])   # Newton slope
    f8  = float(features[8])   # E_order proxy
    f9  = int(features[9])     # Monomial phase flag
    f10 = float(features[10])  # Inseparable initial flag
    f12 = float(features[12])  # primary_jac_problem
    f13 = float(features[13])  # secondary_jac_problem
    f14 = float(features[14])  # Weighted order proxy
    f15 = float(features[15])  # Order shift penalty
    f17 = float(features[17])  # Weighted multiplicity deviation
    f18 = float(features[18])  # Wildness index
    f19 = float(features[19])  # Dim of singular locus
    f20 = float(features[20])  # Num components
    f21 = float(features[21])  # Hilbert-Samuel
    f22 = float(features[22])  # Jacobian order proxy
    f23 = float(features[23])  # Jacobian nonzero partials proxy
    f24 = float(features[24])  # p-adic depth
    f25 = float(features[25])  # Boundary sum

    # Component 1: Phase/Order
    c1 = 0.0 if f9 == 1 else f0

    # Component 2: Primary Complexity Minimization.
    c2 = (
        1.0 * f14
        + 0.1 * f21
        + 0.1 * f1
        + 0.8 * f23
        + 0.5 * f7
        + 0.2 * f17
    )

    # Component 3: Structural Complexity (Bounded).
    c3 = (
        f10
        + 2.0 * f19
        + 0.5 * f20
        + 0.1 * f4
        + 0.2 * f12
        + 0.1 * f13
    )

    # Component 4: Accumulators and contextual pathology interaction.
    baseline_acc_term = (
        10.0 * (f24 ** 2.0)
        + 5.0 * f25
    )

    jac_problem_signal = (
        f6
        + (1.0 - f23)
        + f12
        + f13
    )
    jac_problem_activation = max(0.0, math.tanh(jac_problem_signal + f21 / (1.0 + f22)))

    insep_wild_signal = (
        f10
        + f18
        + f5
        + f19 * f20
        + f4
    )
    insep_wild_activation = max(0.0, math.tanh(insep_wild_signal / 5.0))

    effort_pressure = max(0.0, math.tanh((f24 + f25 + f4) / 10.0))

    catastrophe_magnitude_coeff = 1000.0 * (1.0 + math.tanh((baseline_acc_term + f4 + f5 + f18) / 100.0))

    combined_pathology_activation_strength = (
        0.01
        + 0.5 * jac_problem_activation
        + 0.5 * insep_wild_activation
        + 0.1 * effort_pressure
    )

    contextual_pathology_interaction_term = catastrophe_magnitude_coeff * math.exp(combined_pathology_activation_strength)

    c4 = -1.0 * (baseline_acc_term + contextual_pathology_interaction_term)

    # Component 5: Tie-breaker.
    c5 = (
        f18
        + f5
        + 0.5 * f8
        + 2.0 * f6
        + 0.1 * f15
        - 0.1 * f22
    )

    return (c1, c2, c3, c4, c5)

    # EVOLVE-BLOCK-END
\end{verbatim}

\bibliographystyle{abbrv}
\bibliography{resolutions}

\end{document}